\tikzstyle{black little dot}=[fill=black, draw=black, tikzit category=string diagram, shape=circle, radius=0.1cm]
\tikzstyle{medium box}=[fill=white, draw=black, shape=rectangle, tikzit category=string diagram, minimum width=1cm, minimum height=.75cm, tikzit draw=black, tikzit shape=rectangle, thick]
\tikzstyle{small box}=[fill=white, draw=black, shape=rectangle, tikzit category=string diagram, tikzit draw=black, tikzit shape=rectangle, minimum width=.5cm, minimum height=.5cm, thick]
\tikzstyle{small circle}=[fill=white, draw=black, shape=circle, tikzit category=string diagram, tikzit draw=black, tikzit shape=circle, thick]
\tikzstyle{circle}=[fill=white, draw=black, shape=circle, tikzit category=string diagram, tikzit draw=black, tikzit shape=circle, radius=.1cm, thick]
\tikzstyle{wide small box}=[fill=white, draw=black, shape=rectangle, tikzit category=string diagram, tikzit fill=white, tikzit draw=black, minimum width=1cm, minimum height=.5cm, thick]
\tikzstyle{wire}=[-, draw=black, tikzit draw=black, thick]
\tikzstyle{dashline}=[-, tikzit category=string diagram, thick, dashed]
\tikzstyle{dotsline}=[-, dotted]
\tikzstyle{cyanthinline}=[-, draw=cyan]
\patchcommand\@starttoc{\begin{quote}}{\end{quote}}
\def\@tocline#1#2#3#4#5#6#7{\relax
  \ifnum #1>\c@tocdepth % then omit
  \else
    \par \addpenalty\@secpenalty\addvspace{#2}%
    \begingroup \hyphenpenalty\@M
    \@ifempty{#4}{%
      \@tempdima\csname r@tocindent\number#1\endcsname\relax
    }{%
      \@tempdima#4\relax
    }%
    \parindent\z@ \leftskip#3\relax \advance\leftskip\@tempdima\relax
    \rightskip\@pnumwidth plus4em \parfillskip-\@pnumwidth
    #5\leavevmode\hskip-\@tempdima
      \ifcase #1
       \or\or \hskip 1em \or \hskip 2em \else \hskip 3em \fi%
      #6\nobreak\relax
    \dotfill\hbox to\@pnumwidth{\@tocpagenum{#7}}\par
    \nobreak
    \endgroup
  \fi}
 \theoremstyle{plain}
 \newtheorem{thm}{Theorem}[section]
 \newtheorem{cor}[thm]{Corollary}
 \newtheorem{lem}[thm]{Lemma}
 \newtheorem{prop}[thm]{Proposition}
\theoremstyle{definition}
 \newtheorem{defn}[thm]{Definition}
\theoremstyle{remark}
 \newtheorem{rem}[thm]{Remark}
 \newtheorem{ter}[thm]{Terminology}
 \newtheorem{exam}[thm]{Example}
 \numberwithin{equation}{section}
\theoremstyle{plain}
 \DeclareMathOperator{\dom}{dom}
 \DeclareMathOperator{\cod}{cod}
\def\XXint#1#2#3{{\setbox0=\hbox{$#1{#2#3}{\int}$}
\vcenter{\hbox{$#2#3$}}\kern-.5\wd0}}
\newcommand{\R}{\mathds{R}}
\newcommand{\0}{\emptyset}
\DeclareMathAlphabet{\mathpzc}{OT1}{pzc}{m}{it}
\DeclarePairedDelimiterX\set[1]\{\}{%

#1
}
\newbox\gnBoxA
\newdimen\gnCornerHgt
\newdimen\gnArgHgt
\def\code #1{%
        \setbox\gnBoxA=\hbox{$#1$}%
        \gnArgHgt=\ht\gnBoxA%
        \ifnum \gnArgHgt<\gnCornerHgt
                \gnArgHgt=0pt%
        \else
                \advance \gnArgHgt by -\gnCornerHgt%
        \fi
        \raise\gnArgHgt\hbox{$\ulcorner$} \box\gnBoxA %
                \raise\gnArgHgt\hbox{$\urcorner$}}
\newcommand{\rest}{\upharpoonright}
\newcommand{\fun}{\longrightarrow}
\newcommand{\efun}{\longmapsto}
\newcommand{\sub}{\subseteq}
\newcommand{\mi}{\smallsetminus}
\DeclareMathOperator{\out}{out}
\newcommand{\dand}{\quad \text{and} \quad}
\newcommand{\cat}[1]{\textup{\textsf{#1}}}
\DeclareMathOperator{\cau}{\cat F_{\text{$m$}}}
\DeclareMathOperator{\pa}{pa}
\newcommand{\memph}[1]{{\color{magenta}\emph{#1}}}
\DeclareMathOperator{\intv}{Ref}
\newcommand{\moi}{\mathbf{I}}
\DeclareMathOperator{\inn}{in}
\newcommand{\stri}{\Gamma}
\newcommand{\DD}{\textup{DD}}
\DeclareMathOperator{\cut}{cut}
\DeclareMathOperator{\str}{st}
\DeclareMathOperator{\eval}{ev}
\DeclareMathOperator{\alp}{al}
\DeclareMathOperator{\botimes}{\bar \otimes}
\begin{document}

\title{A graphical construction of free Markov categories}

\author[Y. Yin]{Yimu Yin}
\address[Y. Yin]{Pasadena, California}
\email{yimu.yin@hotmail.com}

%\author[J. Zhang]{Jiji Zhang}
%\address[J. Zhang]{}
%\email{}

\thanks{The author would like to thank Tobias Fritz for very helpful discussions.}

\keywords{Markov category, surgery on string diagram, free category}
\subjclass[2020]{18M30, 18M35, 62A09, 62H22}

\begin{abstract}
We describe how to perform surgeries on Joyal-Street style diagrams and thereby construct free Markov categories. We also show that the construction is functorial over ordered directed acyclic graphs.
\end{abstract}

\maketitle

\tableofcontents

\section{Introduction}

We shall use the graphical language for symmetric monoidal categories, see \cite{selinger2010survey} for a quick overview. Let $\cat C$ be a symmetric monoidal category; the monoidal unit is denoted by $\moi$, the class of objects by $\cat C_0$, and the class of morphisms by $\cat C_1$.  The defining laws for a \memph{cocommutative monoidal comonoid} $A$ in $\cat C$ may be depicted as follows:
\begin{equation}\label{mon:law}
\begin{tikzpicture}[xscale = .65, yscale = .75, baseline=(current  bounding  box.center)]
\begin{pgfonlayer}{nodelayer}
		\node [style=none] (139) at (-1.625, 5.45) {};
		\node [style=none] (141) at (-1.625, 5.325) {};
		\node [style=none] (144) at (-2.1, 6.125) {};
		\node [style=none] (146) at (-2.1, 5.825) {};
		\node [style=none] (246) at (-0.925, 4.8) {};
		\node [style=none] (247) at (-0.925, 4.425) {};
		\node [style=none] (249) at (-0.2, 6.1) {};
		\node [style=none] (250) at (-0.2, 5.325) {};
		\node [style=none] (251) at (-1.15, 6.125) {};
		\node [style=none] (252) at (-1.15, 5.825) {};
		\node [style=none] (281) at (1.15, 5.175) {$=$};
		\node [style=none] (298) at (3.85, 5.45) {};
		\node [style=none] (299) at (3.85, 5.325) {};
		\node [style=none] (300) at (4.325, 6.125) {};
		\node [style=none] (301) at (4.325, 5.825) {};
		\node [style=none] (302) at (3.15, 4.8) {};
		\node [style=none] (303) at (3.15, 4.425) {};
		\node [style=none] (304) at (2.425, 6.1) {};
		\node [style=none] (305) at (2.425, 5.325) {};
		\node [style=none] (306) at (3.375, 6.125) {};
		\node [style=none] (307) at (3.375, 5.825) {};
		\node [style=none] (314) at (9.6, 5.175) {$=$};
		\node [style=none] (315) at (11.875, 5.175) {$=$};
		\node [style=none] (316) at (10.75, 6.1) {};
		\node [style=none] (317) at (10.75, 4.425) {};
		\node [style=none] (318) at (13.05, 6) {$\bullet$};
		\node [style=none] (319) at (13.05, 5.575) {};
		\node [style=none] (320) at (13.75, 5.05) {};
		\node [style=none] (321) at (13.75, 4.425) {};
		\node [style=none] (322) at (14.475, 6.1) {};
		\node [style=none] (323) at (14.475, 5.575) {};
		\node [style=none] (308) at (15.95, 6.25) {};
		\node [style=none] (309) at (17.375, 5.2) {};
		\node [style=none] (310) at (16.675, 4.65) {};
		\node [style=none] (311) at (16.675, 4.275) {};
		\node [style=none] (312) at (17.375, 6.25) {};
		\node [style=none] (313) at (15.95, 5.2) {};
		\node [style=none] (324) at (8.4, 6) {$\bullet$};
		\node [style=none] (325) at (8.4, 5.575) {};
		\node [style=none] (326) at (7.7, 5.05) {};
		\node [style=none] (327) at (7.7, 4.425) {};
		\node [style=none] (328) at (6.975, 6.1) {};
		\node [style=none] (329) at (6.975, 5.575) {};
		\node [style=none] (330) at (20.125, 6.1) {};
		\node [style=none] (331) at (20.125, 5.575) {};
		\node [style=none] (332) at (20.825, 5.05) {};
		\node [style=none] (333) at (20.825, 4.425) {};
		\node [style=none] (334) at (21.55, 6.1) {};
		\node [style=none] (335) at (21.55, 5.575) {};
		\node [style=none] (336) at (18.8, 5.175) {$=$};
		\node [style=none, label={[align=center]center: coassociativity\\$(\delta_A \otimes 1_A) \circ \delta_A =  (1_A \otimes \delta_A) \circ \delta_A$}] (337) at (1.25, 3.175) {};
		\node [style=none, label={[align=center]center:counitality\\$(1_A \otimes \epsilon_A) \circ \delta_A = 1_A = (\epsilon_A \otimes 1_A) \circ \delta_n$}] (338) at (10.95, 3.175) {};
		\node [style=none, label={[align=center]center:cocommutativity\\$\sigma_{AA} \circ \delta_A = \delta_A$}] (339) at (18.8, 3.175) {};
	\end{pgfonlayer}
	\begin{pgfonlayer}{edgelayer}
		\draw [style=wire] (141.center) to (139.center);
		\draw [style=wire] (146.center) to (144.center);
		\draw [style=wire] (247.center) to (246.center);
		\draw [style=wire] (250.center) to (249.center);
		\draw [style=wire] (252.center) to (251.center);
		\draw [style=wire, bend right=90, looseness=1.25] (141.center) to (250.center);
		\draw [style=wire, bend right=90, looseness=1.25] (146.center) to (252.center);
		\draw [style=wire] (299.center) to (298.center);
		\draw [style=wire] (301.center) to (300.center);
		\draw [style=wire] (303.center) to (302.center);
		\draw [style=wire] (305.center) to (304.center);
		\draw [style=wire] (307.center) to (306.center);
		\draw [style=wire, bend left=90, looseness=1.25] (299.center) to (305.center);
		\draw [style=wire, bend left=90, looseness=1.25] (301.center) to (307.center);
		\draw [style=wire] (311.center) to (310.center);
		\draw [style=wire] (317.center) to (316.center);
		\draw [style=wire] (319.center) to (318.center);
		\draw [style=wire] (321.center) to (320.center);
		\draw [style=wire] (323.center) to (322.center);
		\draw [style=wire, bend right=90, looseness=1.25] (319.center) to (323.center);
		\draw [style=wire, in=-90, out=90] (309.center) to (308.center);
		\draw [style=wire] (311.center) to (310.center);
		\draw [style=wire, in=-90, out=90, looseness=0.75] (313.center) to (312.center);
		\draw [style=wire, bend left=90, looseness=1.25] (309.center) to (313.center);
		\draw [style=wire] (325.center) to (324.center);
		\draw [style=wire] (327.center) to (326.center);
		\draw [style=wire] (329.center) to (328.center);
		\draw [style=wire, bend left=90, looseness=1.25] (325.center) to (329.center);
		\draw [style=wire] (331.center) to (330.center);
		\draw [style=wire] (333.center) to (332.center);
		\draw [style=wire] (335.center) to (334.center);
		\draw [style=wire, bend right=90, looseness=1.25] (331.center) to (335.center);
	\end{pgfonlayer}
\end{tikzpicture}
\end{equation}
Our convention is to draw a string diagram in the lower-left to upper-right direction. Therefore, above, the \memph{comultiplication} $\delta_A : A \fun A \otimes A$ is depicted as an upward fork
$\begin{tikzpicture}[xscale = .3, yscale = .3, baseline={([yshift=5pt]current bounding box.south)}]
   \begin{pgfonlayer}{nodelayer}
		\node [style=none] (324) at (8.4, 6.1) {};
		\node [style=none] (325) at (8.4, 5.575) {};
		\node [style=none] (326) at (7.7, 5.05) {};
		\node [style=none] (327) at (7.7, 4.425) {};
		\node [style=none] (328) at (6.975, 6.1) {};
		\node [style=none] (329) at (6.975, 5.575) {};
	\end{pgfonlayer}
	\begin{pgfonlayer}{edgelayer}
		\draw [style=wire] (325.center) to (324.center);
		\draw [style=wire] (327.center) to (326.center);
		\draw [style=wire] (329.center) to (328.center);
		\draw [style=wire, bend left=90, looseness=1.25] (325.center) to (329.center);
	\end{pgfonlayer}
\end{tikzpicture}$
and the \memph{counit} $\epsilon_A : A \fun \moi$ an upward dead-end
$\begin{tikzpicture}[xscale = .3, yscale = .3, baseline={([yshift=5pt]current bounding box.south)}]
   \begin{pgfonlayer}{nodelayer}
		\node [style=none] (326) at (7.7, 5.8) {$\bullet$};
		\node [style=none] (327) at (7.7, 4.425) {};
	\end{pgfonlayer}
	\begin{pgfonlayer}{edgelayer}
		\draw [style=wire] (327.center) to (326.center);
	\end{pgfonlayer}
\end{tikzpicture}$.

Suppose that for each  object $A \in \cat C$ there are distinguished arrows $\delta_A : A \fun A \otimes A$, called the \memph{duplicate} on $A$, and $\epsilon_A : A \fun \moi$, called the \memph{discard} on $A$, that form a cocommutative monoidal comonoid as depicted in (\ref{mon:law}) (the qualifier ``distinguished'' is needed because there may be other such entities). Moreover, they respect the monoidal product:
\begin{equation}\label{fins:frob}
\begin{tikzpicture}[xscale = .55, yscale = .55, baseline=(current  bounding  box.center)]
\begin{pgfonlayer}{nodelayer}
		\node [style=none] (42) at (7.725, -0.75) {$=$};
		\node [style=none] (108) at (4.275, 0.4) {};
		\node [style=none] (110) at (6.275, 0.4) {};
		\node [style=none] (111) at (5.275, -1.825) {};
		\node [style=none] (112) at (4.275, -0.35) {};
		\node [style=none] (113) at (6.275, -0.325) {};
		\node [style=none] (114) at (5.275, -1.075) {};
		\node [style=none] (117) at (9.225, 0.65) {};
		\node [style=none] (118) at (12.125, 0.65) {};
		\node [style=none] (119) at (10.05, -1.825) {};
		\node [style=none] (120) at (9.225, -0.625) {};
		\node [style=none] (121) at (10.85, -0.6) {};
		\node [style=none] (122) at (10.05, -1.225) {};
		\node [style=none] (124) at (10.825, 0.65) {};
		\node [style=none] (125) at (13.825, 0.65) {};
		\node [style=none] (126) at (12.95, -1.825) {};
		\node [style=none] (127) at (12.125, -0.625) {};
		\node [style=none] (128) at (13.825, -0.6) {};
		\node [style=none] (129) at (12.95, -1.25) {};
		\node [style=none] (131) at (19.65, -0.75) {$=$};
		\node [style=none] (134) at (18.2, -1.525) {};
		\node [style=none] (137) at (18.2, -0.05) {$\bullet$};
		\node [style=none] (138) at (20.25, -3) {$\epsilon_{A \otimes B} = \epsilon_{A} \otimes \epsilon_{B}$};
		\node [style=none] (145) at (21.025, -1.525) {};
		\node [style=none] (146) at (21.025, -0.05) {$\bullet$};
		\node [style=none] (148) at (22.275, -1.525) {};
		\node [style=none] (149) at (22.275, -0.05) {$\bullet$};
		\node [style=none] (151) at (9.25, -3) {$\delta_{A \otimes B} = (1_A \otimes \sigma_{BA} \otimes 1_B) \circ (\delta_{A} \otimes \delta_B)$};
	\end{pgfonlayer}
	\begin{pgfonlayer}{edgelayer}
		\draw [style=wire, bend right=90, looseness=1.25] (112.center) to (113.center);
		\draw [style=wire] (108.center) to (112.center);
		\draw [style=wire] (110.center) to (113.center);
		\draw [style=wire] (111.center) to (114.center);
		\draw [style=wire, bend right=90, looseness=1.25] (120.center) to (121.center);
		\draw [style=wire] (117.center) to (120.center);
		\draw [style=wire, in=90, out=-90, looseness=1.25] (118.center) to (121.center);
		\draw [style=wire] (119.center) to (122.center);
		\draw [style=wire, bend right=90, looseness=1.25] (127.center) to (128.center);
		\draw [style=wire, in=90, out=-90, looseness=1.25] (124.center) to (127.center);
		\draw [style=wire] (125.center) to (128.center);
		\draw [style=wire] (126.center) to (129.center);
		\draw [style=wire] (134.center) to (137.center);
		\draw [style=wire] (145.center) to (146.center);
		\draw [style=wire] (148.center) to (149.center);
	\end{pgfonlayer}
\end{tikzpicture}
\end{equation}
Following \cite{fritz2020synthetic}, we call $\cat C$ a \memph{Markov category} if the  discards are natural, that is, they are the component morphisms of the natural transformation between the identity functor $1_{\cat C}$ and the endofunctor that sends everything to $\moi$:
\begin{equation}\label{discar:nat}
\begin{tikzpicture}[xscale = .55, yscale = .55, baseline=(current  bounding  box.center)]
	\begin{pgfonlayer}{nodelayer}
		\node [style=none] (200) at (19.4, 1.75) {$=$};
		\node [style=none] (202) at (17.45, 1.525) {};
		\node [style=none] (205) at (17.45, 0.6) {};
		\node [style=small box] (207) at (17.45, 1.875) {$f$};
		\node [style=none] (208) at (17.45, 3.225) {$\bullet$};
		\node [style=none] (209) at (17.45, 2.3) {};
		\node [style=none] (210) at (21.15, 2.725) {$\bullet$};
		\node [style=none] (211) at (21.15, 1.1) {};
		\node [style=none] (212) at (19.4, -0.75) {$\epsilon_B \circ f = \epsilon_A$};
	\end{pgfonlayer}
	\begin{pgfonlayer}{edgelayer}
		\draw [style=wire] (202.center) to (205.center);
		\draw [style=wire] (208.center) to (209.center);
		\draw [style=wire] (210.center) to (211.center);
	\end{pgfonlayer}
\end{tikzpicture}
\end{equation}
However, it is unreasonable to also require duplicates to be natural, see the opening discussion in \cite[\S~10]{fritz2020synthetic}.

We may forego this discussion on the naturality of discards if the monoidal unit $\moi$ is actually terminal, since in that case the desired property would hold automatically; conversely, the naturality of discards, together with the other conditions stipulated above, implies that  $\moi$ is terminal, and hence $\epsilon_\moi  = 1_\moi$. Also note that $\delta_\moi = 1_\moi$  in any strict Markov category. See \cite[Remark~2.3]{fritz2020synthetic}.

Accordingly, a \memph{Markov functor} is a strong symmetric monoidal functor  $F : \cat C \fun \cat D$ between two Markov categories that preserves the distinguished  comonoids by way of matching duplicates and discards, that is, for all objects $A \in \cat C$, the diagrams
\begin{equation*}
  \bfig
      \Atriangle(0,0)/->`->`->/<400,400>[FA`FA \otimes FA`F(A \otimes A); \delta_{FA}`F\delta_A`\cong]
      \Atriangle(1500,0)/->`->`->/<250,400>[FA`\moi`F \moi; \epsilon_{FA}`F\epsilon_A`\cong]
      %\place(1000,-200)[\text{matching}]
  \efig
\end{equation*}
commute, where  the horizontal arrows are the structure isomorphisms in question; see \cite[Definition~10.14]{fritz2020synthetic}). Of course the discards are matched automatically, since $\moi$ is terminal. Denote by \cat{MarCat} the category of Markov categories and Markov functors. In this paper, for simplicity,  \cat{MarCat} is only implicitly treated as a $2$-category when we occasionally speak of \memph{comonoid equivalence}, that is, equivalence in \cat{MarCat}.

To formulate the notion of a free Markov category, as in \cite{selinger2010survey}, we first need to fix a suitable class of signatures (called ``tensor schemes'' in \cite{joyal1991geometry}), as follows.

A \memph{graph monoid} is a directed graph $G = (V(G), A(G), s, t)$ whose set of vertices come with the additional structure of a monoid. A \memph{graph monoid homomorphism} is a graph homomorphism that is also a homomorphism between the monoids in question.

For any set $S$, let $W(S)$ denote the free monoid of words over (the alphabet) $S$. It is a graph monoid, with the trivial directed graph structure (no arrows). More generally, a \memph{free graph monoid} is a triple $(G, S, g)$, where $G$ is a graph monoid and $g : S \fun V(G)$ is an injection, such that the monoid $V(G)$ is isomorphic to the free monoid $W(S)$ over $g$. We usually just say $G$ is free  graph monoid and leave $S$, $g$ implicit in notation when they are not needed in the discussion. Also it is harmless to treat $S$ as if it is a subset of $V(G)$.

A \memph{free $\DD$-graph monoid} is a free graph monoid $(G, S, g)$ such that, for each $a \in S$, there are distinguished arrows $a \fun aa$, called the \memph{duplicate} on $a$, and $a \fun \0$, called the \memph{discard} on $a$.

A Markov category $\cat C$ is \memph{free} over a free $\DD$-graph monoid $G$ if there is a graph monoid homomorphism $i : G \fun \cat C$ matching duplicates and discards such that, for any Markov category $\cat D$, any graph monoid homomorphism $f : G \fun  \cat D$ matching duplicates and discards factors through $i$, that is, there is a Markov functor $F : \cat C \fun \cat D$ such that $f = F \circ i$, and $F$ is unique up to a unique  monoidal natural isomorphism. The objects and morphisms in $i(G)$ are referred to as \memph{generators}.

Since Markov categories are symmetric monoidal categories, the same graphical language can be used, but  with the additional syntax depicted in (\ref{mon:law}), (\ref{fins:frob}), and (\ref{discar:nat}). This does not give the free  Markov category over a free graph monoid, though, at least not for string diagrams up to isomorphisms. We need a coarser equivalence relation to accommodate the equations in  (\ref{mon:law}), (\ref{fins:frob}), and (\ref{discar:nat}).

\begin{rem}
The surgery method described in this paper works  rather well in some applications where free Markov categories play an essential role, see \cite{yinzhang2021}. For other monoidal categories with more complex conditions, it is not that the method does not work, it is just that the equivalence relation so constructed may be hard to parse visually, and consequently working with a graphical language does not seem to offer advantages over a symbolic one, which in a sense defeats the purpose of graphical languages.

For the construction of free Markov categories, there is a more sophisticated combinatorial approach, see \cite{FriLiang2022}.
\end{rem}

\section{Preliminaries on Joyal-Street style diagrams}

The discussion below assumes familiarity with the first two chapters of  \cite{joyal1991geometry}. We begin by recalling some definitions therein.

A \memph{generalized topological graph} $\Gamma = (\Gamma, \Gamma_0)$ consists of a Hausdorff space $\Gamma$ and a finite subset $\Gamma_0 \sub  \Gamma$ such that the complement $\Gamma \mi  \Gamma_0$ is a one-dimensional manifold without boundary and with finitely many connected components. It is \memph{acyclic} if it has no circuits, that is, there is no embedding from the unit circle in $\R^2$ into $\Gamma$, in particular, each of the connected components in $\Gamma \mi  \Gamma_0$ is homeomorphic to the open unit interval $(0, 1) \sub \R$. We shall only consider acyclic graphs and hence may omit the qualifier for brevity.

An element of $\Gamma_0$ is called a \memph{node}. A connected component of $\Gamma \mi \Gamma_0$ is called an \memph{edge}. The set of edges of $\Gamma$ is denoted by $\Gamma_1$, and the set  $\overline \Gamma \mi \Gamma$ by $\partial \Gamma$, where $\overline \Gamma$ is the topological closure (compactification) of $\Gamma$. The points in $\partial \Gamma$ are referred to as the \memph{outer nodes} of $\Gamma$ (nodes in $\Gamma_0$ are also called \memph{inner nodes} for clarity). Note that $\partial \Gamma$ and $\Gamma_0$ together form the boundary of $\Gamma$.

The  compactification $\hat e$ of an edge $e$ is homeomorphic to the closed unit interval $[0, 1] \sub \R$. An edge $e$ is called \memph{pinned} if the inclusion $e \sub \Gamma$ can be extended to an embedding $\hat e  \fun \Gamma$, and \memph{half-loose} if it can only be extended to $\hat e$ minus one endpoint,  and \memph{loose} if neither of the previous two cases holds.

An \memph{orientation} of an edge $e$ is just a total ordering on the pair of endpoints of $\hat e$, where the \memph{source} $e(0)$ is the image of the first element under the embedding $\hat e \fun  \overline \Gamma$ and the
\memph{target} $e(1)$ that of the last element. We say that $\Gamma$ is \memph{progressive} if it carries a choice of orientation for
each of its edges. In that case, the \memph{input} $\inn(x)$ of a node $x \in \Gamma_0$ is the set of the oriented edges with target $x$ and the \memph{output} $\out(x)$ the set of those with source $x$. An inner node $x \in \Gamma_0$ is \memph{initial} if $\inn(x) = \0$ and \memph{terminal} if $\out(x) = \0$. If, in addition, $\Gamma$ is equipped with a choice of total orderings on $\inn(x)$ and $\out(x)$ for each $x \in \Gamma_0$ then it is  \memph{polarized}. We shall only consider polarized graphs and hence may omit the qualifier for brevity.

The \memph{domain} $\dom(\Gamma)$ of $\Gamma$ consists of the edges whose sources are outer nodes and the \memph{codomain} $\cod(\Gamma)$ those whose targets are outer nodes. The edges in $\dom(\Gamma)$, $\cod(\Gamma)$ are often identified with the corresponding outer nodes for convenience. We say that $\Gamma$ is \memph{anchored} if both $\dom(\Gamma)$ and $\cod (\Gamma)$ are equipped with a total ordering.

\begin{defn}
Let $\cat C$ be a symmetric strict monoidal category. A \memph{valuation} $v : \Gamma \fun \cat C$ is a pair of functions
\[
v_1 : \Gamma_0 \fun \cat C_1, \quad v_0 : \Gamma_1 \fun \cat C_0
\]
such that, for every node $x \in  \Gamma_0$ with $\inn(x) = (a_1, \ldots, a_n)$ and $\out(x) = (b_1, \ldots, b_m)$, $v_1(x)$ is of the form $\bigotimes_i v_0(a_i) \fun \bigotimes_i v_0(b_i)$; here if $n = 0$ or $m = 0$ then the monoidal product in question is just the monoidal unit $\moi$ of $\cat C$. The pair $(\Gamma, v)$ is called a  \memph{diagram}  in $\cat C$, and is denoted simply by $\Gamma$ when the context is clear. If $\Gamma$ is anchored then the  domain $\dom(\Gamma, v)$ of $(\Gamma, v)$ is the object $\bigotimes_{e_i \in \dom(\Gamma)} v_0(e_i)$, where the monoidal product is taken with respect to the ordering on $\dom(\Gamma)$, similarly for the codmain $\cod(\Gamma, v)$ of $(\Gamma, v)$.

An \memph{isomorphism of  diagrams} $\Gamma \fun \Omega$ is an isomorphism of the  graphs  that  is compatible with the valuations in the obvious way. If the graphs are anchored then the isomorphism is required to preserve the anchoring too.

A diagram in a free  graph monoid $(G, S, g)$ is formulated in the same way,  with the valuation $v : \Gamma \fun (G, S, g)$ mapping the edges and nodes into the alphabet $S$ and the set $A(G)$ of arrows, respectively.
\end{defn}

\begin{rem}\label{free:sys:mo:string}
Isomorphisms of graphs are homeomorphisms of one-dimensional topological spaces and, insofar as diagrams in  symmetric monoidal categories are concerned, they can be equivalently replaced by ambient isotopies of graphs embedded in $\R^4$, but not in lower dimensions. The reason is simply that, in the graphical language of symmetric monoidal categories,  we represent a symmetry as a braid, which is then forced by the axioms to be trivial, and indeed all braids in $\R^4$ are trivial. For instance, the crossing in the third diagram of (\ref{mon:law}) is  a projection in $\R^2$ of the symmetry braid in $\R^4$, and there is no need to depict, between the two strands, which one lies over which one, precisely because all the cases represent the same braid, that is, the trivial one. Thus, the intersection point therein is not a node of the graph, it is merely the (intentional) overlap of the shadows in $\R^2$ of the two edges, which do not intersect in $\R^4$ at all.
\end{rem}

\begin{thm}[{\cite[Theorem~2.3]{joyal1991geometry}}]\label{free:sym}
The isomorphism classes of anchored diagrams $(\Gamma, v)$ in the free  graph monoid $G$, denoted by $[\Gamma, v]$, form the free symmetric monoidal category $\cat F_s(G)$ over $G$.
\end{thm}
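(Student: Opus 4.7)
The plan is to follow the classical Joyal--Street strategy. First I would set up the underlying category $\cat F_s(G)$: objects are the elements of the word monoid $V(G) \cong W(S)$, and a morphism $u \fun w$ is an isomorphism class $[\Gamma, v]$ of anchored diagrams with $\dom(\Gamma, v) = u$ and $\cod(\Gamma, v) = w$. Composition $[\Omega, v'] \circ [\Gamma, v]$ is defined by gluing the codomain edges of $\Gamma$ to the domain edges of $\Omega$ along their shared ordering (the labels match letter-by-letter by the typing), and the identity on $u = a_1 \cdots a_n$ is the diagram consisting of $n$ parallel pinned edges labelled by the letters of $u$, with no inner nodes. Well-definedness on isomorphism classes, strict associativity, and unitality are immediate from the topological setup. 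The monoidal product on morphisms is the disjoint union of graphs with the anchorings concatenated, and the symmetry $\sigma_{u,w} : uw \fun wu$ is the braid-crossing diagram on $\lh(u) + \lh(w)$ strands; functoriality of $\otimes$, the interchange law, naturality of $\sigma$, and the hexagon identity each reduce to a visual isotopy of anchored diagrams, with Remark~\ref{free:sys:mo:string} ensuring that these crossings compose as honest symmetries rather than nontrivial braids.

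For the universal property, fix a symmetric strict monoidal category $\cat D$ and a graph monoid homomorphism $f : G \fun \cat D$. Define $F : \cat F_s(G) \fun \cat D$ on objects by $F(a_1 \cdots a_n) = f(a_1) \otimes \cdots \otimes f(a_n)$, which is forced by the monoid-homomorphism property of $f$. To define $F$ on a class $[\Gamma, v]$, I would pick a \emph{progressive presentation}: by acyclicity there is a continuous height function $h : \Gamma \fun \R$ that is strictly monotone along every oriented edge and takes pairwise distinct values at the inner nodes and at the apparent planar crossings. Slicing at generic heights decomposes $\Gamma$ into a vertical stack of elementary layers, each the tensor product of one node $x$ (contributing $f(v_1(x))$) or one crossing (contributing a symmetry $\sigma$) with identities on the remaining strands. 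The candidate value $F([\Gamma, v])$ is the composite of these layers read from bottom to top.

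The main obstacle is to show that this composite is independent of the chosen presentation (and of the representative in the isomorphism class). I would handle this by a Cerf-theoretic analysis: any two generic height functions on the same diagram are connected by a generic one-parameter family of height functions whose singularities are of three elementary types --- (i) two elements (nodes or crossings) exchanging heights while lying in disjoint vertical strips, (ii) a node or crossing sliding past another on a shared strand, and (iii) a triple crossing resolving in a Reidemeister-III fashion. Each type is matched precisely by one axiom of a symmetric strict monoidal category: the interchange law $(1 \otimes h) \circ (g \otimes 1) = (g \otimes 1) \circ (1 \otimes h)$, the naturality of $\sigma$ together with $\sigma \circ \sigma = 1$, and the Yang--Baxter/hexagon identity respectively. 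Hence $F([\Gamma, v])$ is well-defined. Invariance under isomorphisms of the underlying anchored graph is automatic, since such isomorphisms preserve orderings and valuations.

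Finally, $F$ is strict monoidal and symmetric by construction --- disjoint-union presentations give tensor products of composites, and crossings are sent to symmetries --- and it restricts to $f$ on the subcategory generated by $G$ via the one-node, one-edge diagrams, so the required factorisation $f = F \circ i$ holds for the canonical inclusion $i : G \fun \cat F_s(G)$. Uniqueness of $F$ up to a unique monoidal natural isomorphism (the identity, since $\cat D$ is strict) follows because every morphism of $\cat F_s(G)$ is by the progressive presentation a composite of tensor products of generators and symmetries, so any symmetric monoidal functor extending $f$ must agree with $F$ on each elementary piece and hence everywhere.
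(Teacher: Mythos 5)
Your proposal is correct and follows the standard Joyal--Street strategy (diagrams as morphisms, gluing as composition, decomposition into elementary layers via a generic height function, and a deformation/Cerf-type analysis matching the elementary singularities to the interchange law, naturality and involutivity of the symmetry, and the Yang--Baxter identity), which is precisely the argument the paper relies on: the paper offers no proof of its own and simply cites \cite[Theorem~2.3]{joyal1991geometry}.
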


From here on we assume that $\Gamma$ is anchored.

Recall  that  $\Gamma_0$  may be ordered as follows: $x < y$ if and only if there is a directed path $p$ in $\Gamma$ with $p(0) = x$ to $p(1) = y$; the thus formed  partially ordered set is denoted by $\dot\Gamma$.

An initial segment of $\dot \Gamma$ is referred to as a \memph{level} of $\Gamma$, with $\0$ the smallest one and $\dot \Gamma$ the largest one.

An edge $e$ is said to be \memph{cut} by a level $L$ when $e(0) \in L \cup \dom(\Gamma)$ and $e(1) \in (\dot \Gamma \mi L) \cup \cod(\Gamma)$.

Let $\cut(L)$ denote the set of edges cut by $L$; we have $\cut(\0) = \dom(\Gamma)$ and $\cut(\dot \Gamma)  = \cod(\Gamma)$. For a pair of levels $L \sub M$, the \memph{layer} $\Gamma[L, M]$ in $\Gamma$ is the subgraph of $\Gamma$ such that
\begin{itemize}
  \item its inner nodes are those in $M \mi L$,
  \item its pinned edges are those $e \in \Gamma_1$ with $e(0), e(1) \in M \mi L$,
  \item its loose and half-loose edges are those in $\cut(L) \cup \cut(M)$.
\end{itemize}
So $\cut(L) = \dom(\Gamma[L, M])$ and $\cut(M) = \cod(\Gamma[L, M])$, and the loose edges are exactly those in $\cut(L) \cap \cut(M)$, in particular, $\dom(\Gamma) = \dom(\Gamma[\0, L])$ and $\cod(\Gamma) = \cod(\Gamma[L, \dot \Gamma])$.

Any subgraph $\Gamma' \sub \Gamma$ inherits the orientations of the edges and the total orderings of $\inn_{\Gamma'}(x) \sub \inn_{\Gamma}(x)$, $\out_{\Gamma'}(x) \sub \out_{\Gamma}(x)$ for each $x \in \Gamma'_0$, and hence is  polarized. On the other hand, there is not a natural way to order $\dom(\Gamma') \mi \dom(\Gamma)$ or $\cod(\Gamma') \mi \cod(\Gamma)$, so $\Gamma'$ is not naturally anchored unless these two sets are empty.

\begin{rem}\label{order:raley:com}
Since a layer $\Gamma[L, M]$ is not naturally anchored, it cannot be used as is to produce a morphism in  $\cat F_s(G)$. If we choose total orderings on the domain and codomain then of course $\Gamma[L, M]$ may be turned into an anchored graph, although different choices in general bring about nonisomorphic results. In some situations, what  orderings we choose is immaterial,  as long as they are compatible. For instance, if $\Gamma_1$, $\Gamma_2$, and $\Gamma_3$ are only equipped with partial orderings on the domains and codomains then by writing
\[
[\Gamma, v] = [\Gamma_3, v_3] \circ [\Gamma_2, v_2] \circ [\Gamma_1, v_1]
\]
we mean that the partial orderings may be extended to compatible total orderings so to make the equality hold. In particular, this shall be what is meant when we write
\[
[\Gamma, v] = [\Gamma[M, \dot \Gamma], v_3] \circ [\Gamma[L, M], v_2] \circ [\Gamma[\0, L], v_1],
\]
where $v_1$, $v_2$, and $v_3$ are the induced valuations.
\end{rem}

\section{Surgeries on diagrams}

We continue to work with an anchored graph $\Gamma$.

\begin{defn}\label{def:full}
We say that a subgraph $\Gamma' \sub \Gamma$ is \memph{normal} if
\begin{itemize}
  \item no directed path in $\Gamma$ contains two distinct edges in $\Gamma'$ if one of them is loose in $\Gamma'$,
  \item the degree of every $x \in \Gamma'_0$ in $\Gamma'$ is the same as in $\Gamma$, in other words, all the edges at $x$ in $\Gamma$ are included in $\Gamma'$,
  \item every directed path  $p$ in $\Gamma$ with $p(0), p(1) \in \Gamma'_0$ belongs to  $\Gamma'$.
\end{itemize}
\end{defn}

For a subgraph $\Gamma' \sub \Gamma$, let $\Gamma'^{\flat}$ denote the subgraph obtained from $\Gamma'$ by deleting its loose edges.

Observe that if two subgraphs $\Gamma'$, $\Gamma''$ have the same set of nodes and both satisfy the second  condition above then $\Gamma'^{\flat}$, $\Gamma''^{\flat}$ must be equal. Also, in a normal subgraph $\Gamma'$ of $\Gamma$, if $x \in  \Gamma'_0$ is initial in $\Gamma'$ then it is initial in $\Gamma$, similarly if it is terminal.

\begin{lem}\label{full:lay}
A subgraph $\Gamma' \sub \Gamma$ is normal if and only if there is a layer $\Gamma[L, M]$ in $\Gamma$ such that $\Gamma' \sub \Gamma[L, M]$ and $\Gamma[L, M]^\flat = \Gamma'^{\flat}$.
\end{lem}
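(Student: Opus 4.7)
The plan is to prove both directions separately, with the forward implication being the substantive one since it requires explicitly constructing the levels $L \sub M$ from the given normal subgraph.

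For the backward direction, suppose the layer $\Gamma[L, M]$ exists. Since $\Gamma[L,M]^\flat$ and $\Gamma'^\flat$ have the same inner nodes, we have $N := \Gamma'_0 = M \mi L$. Clause~(2) of Definition~\ref{def:full} is then immediate: every edge of $\Gamma$ at a node $x \in N$ is either pinned or half-loose in $\Gamma[L, M]$, hence lies in $\Gamma[L, M]^\flat = \Gamma'^\flat \sub \Gamma'$. For clause~(3), a directed path in $\Gamma$ between two nodes of $N$ stays inside $N$ because $L$ and $M$ are both downward closed under the path-order on $\dot\Gamma$; each edge of such a path therefore has both endpoints in $N$ and is pinned in the layer. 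For clause~(1), if $e_1 \in \Gamma'$ is loose then $e_1 \in \cut(L) \cap \cut(M)$, and a second $\Gamma'$-edge $e_2$ on the same directed path as $e_1$ would have an endpoint forced into $\dot\Gamma \mi M$ on the target side of $e_1$ or into $L$ on the source side, placing $e_2$ outside the pinned edges and outside $\cut(L) \cup \cut(M)$, contradicting $e_2 \in \Gamma[L, M]$.

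For the forward direction, assume $\Gamma'$ is normal. Set $N = \Gamma'_0$, let $V^-$ be the set of inner nodes of $\Gamma$ that occur as the source of some loose edge of $\Gamma'$, define $M$ to be the smallest initial segment of $\dot\Gamma$ containing $N \cup V^-$, and put $L = M \mi N$. The crux is to show that $L$ is itself an initial segment. Given $y \in L$ and $z < y$, the downward closure of $M$ gives $z \in M$; we must show $z \notin N$. By the minimality of $M$, the node $y \in M \mi N$ falls into one of three cases: (a) $y$ is a strict predecessor of some $x \in N$; (b) $y \in V^-$; or (c) $y$ is a strict predecessor of some $v \in V^-$. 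If $z \in N$, then in case~(a) the concatenated path $z \to \cdots \to y \to \cdots \to x$ connects two nodes of $N$ and by clause~(3) lies entirely in $\Gamma'$, forcing $y \in N$, a contradiction. In cases~(b) and~(c), one builds a directed path starting at $z \in N$ and ending with a loose edge of $\Gamma'$ emanating from some $v \in V^-$; the first edge of this path is in $\Gamma'$ by clause~(2), so together with the terminal loose edge it produces two $\Gamma'$-edges on one directed path, one of them loose, violating clause~(1).

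It remains to verify $\Gamma' \sub \Gamma[L, M]$ and $\Gamma[L, M]^\flat = \Gamma'^\flat$. The second equality is immediate from clause~(2) since both sides equal the set of $\Gamma$-edges with at least one endpoint in $N$. For the inclusion, the non-loose edges of $\Gamma'$ are handled by the same trichotomy (a)--(c): an endpoint lying in $L$ of a $\Gamma'$-edge incident to $N$ would, via a suitably built path, contradict one of the three normality clauses, so each such edge is either pinned or half-loose in the layer. Each loose edge $e \in \Gamma'$ has $e(0) \in V^- \cup \dom(\Gamma) \sub L \cup \dom(\Gamma)$, while a parallel argument applied to $e(1)$ (using the loose edge $e$ itself to invoke clause~(1)) rules out $e(1) \in L$ and shows $e(1) \in (\dot\Gamma \mi M) \cup \cod(\Gamma)$; hence $e \in \cut(L) \cap \cut(M)$ sits in the layer as a loose edge. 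The main obstacle throughout is the downward-closure of $L$, where all three normality conditions must be invoked together; once this is secured, the remaining verifications are straightforward case analyses based on the same trichotomy.
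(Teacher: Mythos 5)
Your proof is correct and follows essentially the same route as the paper's: the backward direction verifies the three normality clauses for $\Gamma'$ via the layer structure, and the forward direction takes $M$ to be the downward closure of $\Gamma'_0$ together with the sources of the loose edges and $L = M \mi \Gamma'_0$, which coincides with the paper's construction, with the same path-concatenation arguments showing $L$ is a level and that loose edges of $\Gamma'$ are cut by both $L$ and $M$. (One small remark: your claim that a non-loose $\Gamma'$-edge incident to $N$ cannot have its other endpoint in $L$ is not needed and is in fact false in general --- such an edge simply lands in $\cut(L)$ as a half-loose edge of the layer --- but this is harmless since the inclusion of non-loose edges already follows from $\Gamma'^{\flat} = \Gamma[L,M]^{\flat}$.)
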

\begin{proof}
For the ``if'' direction, it is enough to show that $\Gamma[L, M]$ is normal. First, let $p$ be a directed path in $\Gamma$. Suppose for contradiction that $p$ contains two distinct edges $e$, $e'$ in $\Gamma[L, M]$, where $e$ is loose in $\Gamma[L, M]$. We have, in $\dot \Gamma$, either $e(1) \leq e'(0)$ or $e'(1) \leq e(0)$. For the first case, since $e$ is cut by $M$, we have $e(1) \in \dot \Gamma \mi M$ and hence $e'(0) \in \dot \Gamma \mi M$, which entails that  $e'$ cannot be in $\Gamma[L, M]$, contradiction. Similarly, for the second case, since $e$ is cut by $L$ as well, we have $e(0) \in L$ and hence $e'(1) \in  L$, so  $e'$ cannot be in $\Gamma[L, M]$, contradiction again. Next, if $p(0), p(1) \in \Gamma[L, M]_0 = M \mi L$ then clearly $p$ belongs to $\Gamma[L, M]$. Lastly, for any $x \in \Gamma[L, M]_0$, if $e \in \inn_\Gamma(x)$ is cut by $L$ then it belongs to $\Gamma[L, M]$ by definition, otherwise $e(0) \in M \mi L$ and hence $e$ still belongs to $\Gamma[L, M]$ by definition, similarly for $e \in \out_\Gamma(x)$. So $\Gamma[L, M]$ is normal.

For the ``only if'' direction, let $\Gamma' \sub \Gamma$ be normal. Let $L$ be the set of the nodes $x \in \dot \Gamma$ such that either $x \notin \Gamma'_0$ and $x < y$ for some $y \in \Gamma'_0$ or $x \leq  e(0) \in \dot \Gamma$ for some loose edge $e$ in $\Gamma'$. If the first case holds and $z < x$ then $z \notin \Gamma'_0$, for otherwise $z < x < y$ and hence $x \in \Gamma'_0$ too by the third condition in Definition~\ref{def:full}, contradiction. So $L$ is a level. Actually, if $x \leq  e(0) \in \dot \Gamma$ for some loose edge $e$ in $\Gamma'$  then $x \notin  \Gamma'_0$, for otherwise $x < e(0)$ and there would be a directed path in $\Gamma$ that contains $e$ and an edge $e'$ at $x$, and since $e' \in \Gamma'$ by the second condition in Definition~\ref{def:full}, this contradicts the first condition therein. So  $L \cap \Gamma'_0 = \0$. Now, let $M$ be the set of the nodes $x \in \dot \Gamma$ such that $x \leq y$ for some $y \in \Gamma'_0$ or $x \leq e(0) \in \dot \Gamma$ for some loose edge $e$ in $\Gamma'$.  Clearly $M$ is a level and contains $L$. We have $M \mi L = \Gamma'_0$.

Since $\Gamma[L, M]^{\flat}$ is normal too, we have $\Gamma[L, M]^{\flat} = \Gamma'^{\flat}$. For any loose edge $e$ in $\Gamma'$, if $e(1) \in M$ then there would be a directed path in $\Gamma$ that contains two distinct edges in $\Gamma'$, one of which is $e$,  contradicting the first condition in Definition~\ref{def:full}, so  $e$ is cut by both $L$ and $M$, in other words, $e$ is a loose edge in $\Gamma[L, M]$. So $\Gamma' \sub \Gamma[L, M]$.
\end{proof}

Denote by $\Gamma'^{\sharp}$ the layer $\Gamma[L, M]$ constructed from $\Gamma'$ in the latter half of the proof above.

So normal subgraphs are almost the same as layers. The point, of course, is that they are more intuitive to work with.

Suppose that $\Gamma' \sub \Gamma$ is a normal subgraph. Let $\phi : \Omega \fun \Gamma'$ be an isomorphism of  graphs. Let $\Lambda$ be a  graph and
\[
\alpha : \dom(\Lambda) \fun \dom(\Omega), \quad \beta: \cod(\Lambda) \fun \cod(\Omega)
\]
bijections. We can graft $\Lambda$ onto $\Gamma$ by identifying the edges in $\dom(\Lambda)$, $\cod(\Lambda)$ with the corresponding edges in $\Gamma'$ --- for this to work, the loose edges in $\Gamma'$ need to be duplicated, with one copy each in $\dom(\Gamma')$ and $\cod(\Gamma')$ ---  and then cut out the rest of $\Gamma'$ from $\Gamma$. We call this operation the \memph{$(\Omega / \Lambda, \phi, \alpha, \beta)$-surgery} on $\Gamma$ and write $\Gamma^\Omega_\Lambda(\phi, \alpha, \beta)$ for the resulting graph, where $\phi$, $\alpha$, and $\beta$ shall be dropped from the notation when the context is clear; the triple $(\Omega / \Lambda, \alpha, \beta)$ is referred to as the \memph{template} of the surgery.

Clearly there are induced bijections
\[
\alpha^\sharp : \dom(\Gamma^\Omega_\Lambda) \fun \dom(\Gamma), \quad \beta^\sharp:  \cod(\Gamma^\Omega_\Lambda) \fun \cod(\Gamma)
\]
and hence $\Gamma^\Omega_\Lambda$ is indeed naturally anchored; in particular, if $\Lambda$ is isomorphic to $\Omega$ (as polarized graphs) then $\Gamma^\Omega_\Lambda$ is isomorphic to $\Gamma$ (as anchored polarized graphs). If the template is empty, that is, if $\Lambda = \Omega = \0$, then $\Gamma^\Omega_\Lambda = \Gamma$.

Let $G$ be a free graph monoid. Let $v : \Gamma \fun G$, $w : \Lambda \fun G$,  and $u : \Omega \fun G$ be valuations. Let $\phi : (\Omega, u) \fun (\Gamma', v')$ be an isomorphism of  diagrams, where  $v'$ is the restriction of $v$ to $\Gamma'$. Suppose that $u$, $w$ are compatible with respect to $\alpha$ and $\beta$, that is, $w(e) = (u \circ \alpha)(e)$ for $e \in \dom(\Lambda)$,  and similarly for $e \in \cod(\Lambda)$. Since $\Gamma' \sub \Gamma$ is normal, there is an induced valuation $v_w : \Gamma^\Omega_\Lambda \fun G$ fusing $v$, $w$ together such that $v_w(e) = (v \circ \alpha^\sharp)( e)$ for $e \in \dom(\Gamma^\Omega_\Lambda)$, and similarly for  $e \in \cod(\Gamma^\Omega_\Lambda)$. So \[
\dom(\Gamma^\Omega_\Lambda, v_w) = \dom(\Gamma, v) \dand \cod(\Gamma^\Omega_\Lambda, v_w) = \cod(\Gamma, v).
\]
The transition from the diagram $(\Gamma, v)$ to the diagram $(\Gamma^\Omega_\Lambda, v_w)$ is called the \memph{$((\Omega, u) / (\Lambda, w), \phi, \alpha, \beta)$-surgery} on $(\Gamma, v)$ with the \memph{template} $((\Omega, u) / (\Lambda, w), \alpha, \beta)$; again,  we usually only show $\Omega / \Lambda$ in notation.

Observe that a surgery can be reversed (by another surgery) in the sense that the resulting diagram is isomorphic to the one we started with.

\begin{lem}\label{sur:decom}
Let $\Omega$, $\Lambda$ be as above.  An anchored diagram $(\Xi, t)$ is the result of a surgery on  $(\Gamma, v)$ with a template  $\Omega / \Lambda$ if and only if there are
\begin{itemize}
  \item a normal subgraph $\Gamma' \sub \Gamma$ isomorphic to $\Omega$,
  \item total orderings that anchor $\Lambda$ and $\Gamma'$,
  \item a valuation $w: \Lambda \fun G$ with $\dom(\Lambda, w) = \dom(\Gamma', v')$ and $\cod(\Lambda, w) = \cod(\Gamma', v')$,
  \item morphisms $g$, $h$ in $\cat F_s(G)$
\end{itemize}
such that
\[
h \circ [\Gamma'^{\sharp}, v'^{\sharp}] \circ g = [\Gamma, v] \dand h \circ [\Lambda^{\sharp}, w^{\sharp}] \circ g = [\Xi, t],
\]
where $v'^{\sharp}$ is the restriction of $v$ to $\Gamma'^{\sharp}$, $\Lambda^{\sharp} = \Lambda \uplus (\Gamma'^{\sharp} \mi \Gamma')$,  and  $w^{\sharp} = w \uplus (v \rest (\Gamma'^{\sharp} \mi \Gamma'))$.
\end{lem}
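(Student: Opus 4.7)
The proof splits into two directions, both resting on the layer-decomposition machinery of Lemma~\ref{full:lay} and Remark~\ref{order:raley:com}.

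For the ``only if'' direction, suppose $(\Xi, t) \cong (\Gamma^\Omega_\Lambda, v_w)$ arises from an $(\Omega/\Lambda, \phi, \alpha, \beta)$-surgery with normal subgraph $\Gamma' \sub \Gamma$. Applying Lemma~\ref{full:lay} produces a layer $\Gamma'^{\sharp} = \Gamma[L, M]$ containing $\Gamma'$ and differing from it only in loose edges; in particular, $\Gamma'^{\sharp} \mi \Gamma'$ consists precisely of the loose edges of the layer that pass through without belonging to $\Gamma'$. Pick total orderings on $\cut(L)$ and $\cut(M)$ extending the orderings on $\dom(\Gamma')$ and $\cod(\Gamma')$, and transport these via $\phi^{-1}$, $\alpha$, $\beta$ to anchor $\Omega$ and $\Lambda$. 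Remark~\ref{order:raley:com} then yields
\[
[\Gamma, v] = h \circ [\Gamma'^{\sharp}, v'^{\sharp}] \circ g,
\]
where $g$ and $h$ are the isomorphism classes of the outer layers $\Gamma[\0, L]$ and $\Gamma[M, \dot \Gamma]$ (with their induced valuations). By the definition of surgery, $\Gamma^\Omega_\Lambda$ is obtained from $\Gamma$ by removing $\Gamma'$ and grafting $\Lambda$ in its place, leaving the two outer layers intact and replacing the middle layer with $\Lambda$ together with the loose edges of $\Gamma'^{\sharp} \mi \Gamma'$, i.e., with $\Lambda^{\sharp}$. Hence $[\Xi, t] = h \circ [\Lambda^{\sharp}, w^{\sharp}] \circ g$.

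For the ``if'' direction, begin with the listed data. Choose any polarized-graph isomorphism $\phi : \Omega \fun \Gamma'$ and put $u := v' \circ \phi^{-1}$, making $\phi$ a diagram isomorphism. The orderings on $\dom(\Lambda), \cod(\Lambda), \dom(\Gamma'), \cod(\Gamma')$ together with the valuation equalities $\dom(\Lambda, w) = \dom(\Gamma', v')$ and $\cod(\Lambda, w) = \cod(\Gamma', v')$ determine unique bijections $\alpha : \dom(\Lambda) \fun \dom(\Omega)$ and $\beta : \cod(\Lambda) \fun \cod(\Omega)$ compatible with $u$ and $w$; performing the resulting $(\Omega/\Lambda, \phi, \alpha, \beta)$-surgery yields $(\Gamma^\Omega_\Lambda, v_w)$. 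Now unpack Remark~\ref{order:raley:com}: the first equation says that $(\Gamma, v)$ is isomorphic to the concrete diagram obtained by gluing anchored representatives of $g$, $[\Gamma'^{\sharp}, v'^{\sharp}]$, and $h$ along their shared boundaries. Replacing the middle representative by one of $[\Lambda^{\sharp}, w^{\sharp}]$ in this gluing substitutes $\Gamma'^{\sharp}$ by $\Lambda^{\sharp}$ inside $\Gamma$; since the loose edges of $\Gamma'^{\sharp} \mi \Gamma'$ simply pass through the layer and are preserved by the substitution, the result is precisely $(\Gamma^\Omega_\Lambda, v_w)$. Its isomorphism class is therefore $h \circ [\Lambda^{\sharp}, w^{\sharp}] \circ g = [\Xi, t]$, as required.

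The main technical obstacle is the bookkeeping of orderings and their consistent transport through $\phi$, $\alpha$, $\beta$. One must verify that the extensions from partial to total orderings on $\cut(L)$, $\cut(M)$ can be chosen compatibly on both sides: in the ``only if'' direction, extensions chosen freely determine the isomorphism classes of $g$ and $h$; in the ``if'' direction, the given $g$ and $h$ tacitly prescribe extensions via their domains and codomains, and one appeals to Remark~\ref{order:raley:com} to see that any such choice is admissible. Once the orderings are aligned, the surgery operation is unmasked as nothing more than the substitution of a middle factor in a composition, which is exactly the content that Theorem~\ref{free:sym} gives us about $\cat F_s(G)$.
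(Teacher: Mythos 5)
Your ``only if'' direction is precisely the paper's argument: invoke Lemma~\ref{full:lay} to realize $\Gamma'^{\sharp}$ as a layer $\Gamma[L,M]$, take $g$ and $h$ to be the outer layers $[\Gamma[\0,L], v_L]$ and $[\Gamma[M,\dot\Gamma], v_M]$, and read off both factorizations via the convention of Remark~\ref{order:raley:com}. That half is correct.

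The ``if'' direction, which the paper dismisses as clear, is where your write-up has a genuine gap. You perform the surgery on the \emph{given} normal subgraph $\Gamma'$ and then assert that the resulting diagram has class $h \circ [\Lambda^{\sharp}, w^{\sharp}] \circ g$. That step fails in general, because $g$ and $h$ are arbitrary morphisms satisfying $h \circ [\Gamma'^{\sharp}, v'^{\sharp}] \circ g = [\Gamma, v]$; nothing forces them to be the two outer layers determined by $\Gamma'$. Concretely, let $G$ have one generating object $a$ and two generating arrows $f, k : a \fun a$, let $\Gamma$ be the two-node chain representing $[f] \circ [f]$, let $\Gamma'$ be the \emph{first} $f$-node together with its two incident edges (a normal subgraph with $\Gamma'^{\sharp} = \Gamma'$), let $\Lambda$ be a single $k$-node, and take $g = [f]$, $h = 1_a$. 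Then $h \circ [\Gamma'^{\sharp}] \circ g = [f]\circ[f] = [\Gamma]$ and $h \circ [\Lambda^{\sharp}] \circ g = [k]\circ[f]$, yet the surgery on $\Gamma'$ produces $[f]\circ[k] \neq [k]\circ[f]$. The lemma's conclusion still holds here --- $[k]\circ[f]$ is the result of the surgery performed on the \emph{second} node --- and that indicates the correct repair: the isomorphism from the diagram obtained by gluing representatives of $g$, $[\Gamma'^{\sharp}, v'^{\sharp}]$, $h$ onto $\Gamma$ carries the middle factor to \emph{some} normal subgraph of $\Gamma$ isomorphic to $\Omega$ (it is a layer of the glued diagram, hence normal by Lemma~\ref{full:lay}, and normality is preserved under isomorphism of diagrams), and the surgery must be performed on \emph{that} subgraph rather than on the $\Gamma'$ you started with. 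Your closing remark about transporting orderings addresses only the anchoring bookkeeping, not this possible relocation of the surgery site.
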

\begin{proof}
The ``if'' direction is clear. For the ``only if'' direction, let $\Gamma' \sub \Gamma$ be the normal subgraph  in question. By Lemma~\ref{full:lay}, $\Gamma' \sub \Gamma'^{\sharp} = \Gamma[L, M]$ for some levels $L$, $M$. Then
\begin{gather*}
[\Gamma, v] = [\Gamma[M, \dot \Gamma], v_M] \circ [\Gamma'^{\sharp}, v'^{\sharp}] \circ [\Gamma[\0, L], v_L],\\
[\Xi, t] = [\Gamma[M, \dot \Gamma], v_M] \circ [\Lambda^{\sharp}, w^{\sharp}] \circ [\Gamma[\0, L], v_L].
\end{gather*}
where $v_M$, $v_L$ are the induced valuations; here we have used the convention outlined in Remark~\ref{order:raley:com}. The lemma follows.
\end{proof}

We shall often denote a diagram simply by $\Gamma$ when the valuation may be left implicit. By the same token, the same symbol may denote both the diagram and the underlying graph.

\begin{defn}
Let $T$ be a \memph{symmetric} set of  templates that contains the empty one; this just means that if $((\Omega, u) / (\Lambda, w), \alpha, \beta)$ is  in $T$ then $( (\Lambda, w) / (\Omega, u), \alpha^{-1}, \beta^{-1})$ is also   in $T$. A surgery  with a template in $T$ is referred to as a \memph{$T$-surgery}. Two anchored  diagrams $\Gamma$, $\Upsilon$ in $G$ are \memph{$T$-equivalent}, denoted by $\Gamma \leftrightsquigarrow_T \Upsilon$, if there is a sequence of anchored  diagrams
\[
(\Gamma = \Gamma_1, \ldots, \Gamma_n = \Upsilon)
\]
such that, for each $i$, there is a $T$-surgery on $\Gamma_i$ whose result is isomorphic to $\Gamma_{i+1}$.
\end{defn}

If $[\Gamma] = [\Upsilon]$, $[\Gamma'] = [\Upsilon']$, and $\Gamma \leftrightsquigarrow_T \Gamma'$ then clearly $\Upsilon \leftrightsquigarrow_T \Upsilon'$. So we may treat $T$-equivalence as a relation on the set of objects of $\cat F_s(G)$. It is also easy to see that if $[\Gamma] \leftrightsquigarrow_T [\Gamma']$ and $[\Upsilon] \leftrightsquigarrow_T [\Upsilon']$ then
\begin{itemize}
  \item $[\Upsilon] \otimes [\Gamma] \leftrightsquigarrow_T [\Upsilon'] \otimes [\Gamma']$,
  \item $[\Upsilon] \circ [\Gamma] \leftrightsquigarrow_T [\Upsilon'] \circ [\Gamma']$ if the compositions are defined.
\end{itemize}
So $T$-equivalence is indeed a monoidal congruence relation on $\cat F_s(G)$; we denote it by $\bm S_T$. It follows that the quotient $\cat F_s(G) / \bm S_T$ is a symmetric strict monoidal category.

Let $\bm  E_{T}$ be the monoidal congruence relation on $\cat F_s(G)$ generated by the pairs $([\Omega, u], [\Lambda, w])$, where $((\Omega, u) / (\Lambda, w), \alpha, \beta)$ runs through the templates in $T$ with any chosen orderings on the domains and codomains of $\Omega$, $\Lambda$ that are compatible with $\alpha$, $\beta$.

\begin{cor}\label{graph:small}
$\bm S_T = \bm  E_{T}$.
\end{cor}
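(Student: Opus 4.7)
The plan is to prove the two inclusions $\bm E_T \subseteq \bm S_T$ and $\bm S_T \subseteq \bm E_T$ separately, with the second direction being the substantive one.

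For the inclusion $\bm E_T \subseteq \bm S_T$, recall from the discussion preceding the corollary that $\bm S_T$ is already a monoidal congruence relation on $\cat F_s(G)$. Since $\bm E_T$ is by definition the smallest such relation containing the generating pairs, it suffices to show that every generator $([\Omega, u], [\Lambda, w])$ (coming from a template in $T$ with any chosen compatible orderings) lies in $\bm S_T$. For this, apply the surgery directly to $\Omega$ itself: take $\Gamma = \Omega$ and $\Gamma' = \Omega$, which is trivially a normal subgraph of itself. The resulting diagram is precisely $(\Lambda, w)$, so $[\Omega, u] \leftrightsquigarrow_T [\Lambda, w]$, as required.

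For the inclusion $\bm S_T \subseteq \bm E_T$, since $\bm S_T$ is generated (in a monoidal congruence sense, but in fact more directly by the chain-of-surgeries definition) by single-surgery relations, it suffices to show that if $(\Xi, t)$ is the result of one $T$-surgery on $(\Gamma, v)$ with template $((\Omega, u)/(\Lambda, w), \alpha, \beta)$, then $[\Gamma, v] \,\bm E_T\, [\Xi, t]$. Here I would invoke Lemma~\ref{sur:decom} to obtain morphisms $g, h$ in $\cat F_s(G)$ together with a normal subgraph $\Gamma' \sub \Gamma$ isomorphic to $\Omega$, such that
\[
[\Gamma, v] = h \circ [\Gamma'^{\sharp}, v'^{\sharp}] \circ g \dand [\Xi, t] = h \circ [\Lambda^{\sharp}, w^{\sharp}] \circ g.
\]
Now $\Gamma'^{\sharp} = \Gamma' \uplus (\Gamma'^{\sharp} \mi \Gamma')$ and $\Lambda^{\sharp} = \Lambda \uplus (\Gamma'^{\sharp} \mi \Gamma')$ differ only in the inner-node piece ($\Gamma'$ versus $\Lambda$), with the extra $\Gamma'^{\sharp} \mi \Gamma'$ contributing only loose edges, i.e., identities. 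Choosing compatible orderings on the domains and codomains (as permitted by Remark~\ref{order:raley:com}) that place the edges of $\Gamma'$ (resp.\ $\Lambda$) contiguously, we may write
\[
[\Gamma'^{\sharp}, v'^{\sharp}] = [\Omega, u] \otimes \bigotimes_{\text{loose}} 1_{v(e)} \dand [\Lambda^{\sharp}, w^{\sharp}] = [\Lambda, w] \otimes \bigotimes_{\text{loose}} 1_{v(e)},
\]
up to pre- and post-composition with suitable symmetries, which are absorbed into $g$ and $h$. Since $([\Omega, u], [\Lambda, w]) \in \bm E_T$ and $\bm E_T$ is a monoidal congruence relation, tensoring on both sides with the common identities and then composing with $g$ and $h$ preserves the relation, yielding $[\Gamma, v] \,\bm E_T\, [\Xi, t]$.

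The main obstacle is the bookkeeping in the last step: one must verify that the ordering conventions (and the symmetry morphisms they force) are handled consistently so that the two decompositions really reduce to $[\Omega, u]$ versus $[\Lambda, w]$ tensored with the same identities. This is not deep but does require invoking Remark~\ref{order:raley:com} to select compatible orderings and then using that $\bm E_T$ is closed under conjugation by symmetries (which follows from its being a monoidal congruence in a symmetric monoidal category).
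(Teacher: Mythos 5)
Your proposal is correct and follows essentially the same route as the paper: the inclusion $\bm S_T \subseteq \bm E_T$ via the decomposition of Lemma~\ref{sur:decom} (with the extra edges of $\Gamma'^{\sharp} \mi \Gamma'$ contributing only identities), and $\bm E_T \subseteq \bm S_T$ by checking the generators, which is what the paper's ``routine induction on how $\bm E_T$ is generated'' amounts to, streamlined here by using that $\bm S_T$ was already shown to be a monoidal congruence. The bookkeeping details you supply (compatible orderings per Remark~\ref{order:raley:com}, symmetries absorbed into $g$ and $h$) are exactly the ones the paper leaves implicit.
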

\begin{proof}
That $\bm S_T \sub \bm  E_{T}$ is an easy consequence of Lemma~\ref{sur:decom}. That $\bm S_T \supseteq \bm  E_{T}$ follows from a routine induction on how $\bm  E_{T}$ is generated.
\end{proof}

\section{Constructing free Markov categories}

Suppose that $(G, S, g)$ is a free $\DD$-graph monoid.

The five equalities in (\ref{mon:law}) and (\ref{discar:nat}) give rise to the \memph{set of Markov templates} and the corresponding \memph{Markov surgeries}. We describe in detail what these  templates are. For coassociativity,
\begin{equation}\label{temp:coass}
\begin{tikzpicture}[xscale = .65, yscale = .6, baseline=(current  bounding  box.center)]
\begin{pgfonlayer}{nodelayer}
		\node [style=none] (139) at (-1.625, 5.45) {};
		\node [style=none] (141) at (-1.625, 5.325) {};
		\node [style=none, label={above : $H$}] (144) at (-2.1, 6.125) {};
		\node [style=none] (146) at (-2.1, 5.825) {};
		\node [style=none] (246) at (-0.925, 4.8) {};
		\node [style=none, label={below : $T$}] (247) at (-0.925, 4.425) {};
		\node [style=none, label={above : $A$}] (249) at (-0.2, 6.1) {};
		\node [style=none] (250) at (-0.2, 5.325) {};
		\node [style=none, label={above : $C$}] (251) at (-1.15, 6.125) {};
		\node [style=none] (252) at (-1.15, 5.825) {};
		\node [style=none] (281) at (3.4, 5.175) {$=$};
		\node [style=none] (298) at (6.1, 5.45) {};
		\node [style=none] (299) at (6.1, 5.325) {};
		\node [style=none, label={above : $A$}] (300) at (6.575, 6.125) {};
		\node [style=none] (301) at (6.575, 5.825) {};
		\node [style=none] (302) at (5.4, 4.8) {};
		\node [style=none, label={below : $T$}] (303) at (5.4, 4.425) {};
		\node [style=none, label={above : $H$}] (304) at (4.675, 6.1) {};
		\node [style=none] (305) at (4.675, 5.325) {};
		\node [style=none, label={above : $C$}] (306) at (5.625, 6.125) {};
		\node [style=none] (307) at (5.625, 5.825) {};
		\node [style=none] (338) at (-3.425, 5.175) {$=$};
		\node [style=none] (339) at (-4.675, 5.175) {$\Lambda_a$};
		\node [style=none] (340) at (2.075, 5.175) {$\Omega_a$};
	\end{pgfonlayer}
	\begin{pgfonlayer}{edgelayer}
		\draw [style=wire] (141.center) to (139.center);
		\draw [style=wire] (146.center) to (144.center);
		\draw [style=wire] (247.center) to (246.center);
		\draw [style=wire] (250.center) to (249.center);
		\draw [style=wire] (252.center) to (251.center);
		\draw [style=wire, bend right=90, looseness=1.25] (141.center) to (250.center);
		\draw [style=wire, bend right=90, looseness=1.25] (146.center) to (252.center);
		\draw [style=wire] (299.center) to (298.center);
		\draw [style=wire] (301.center) to (300.center);
		\draw [style=wire] (303.center) to (302.center);
		\draw [style=wire] (305.center) to (304.center);
		\draw [style=wire] (307.center) to (306.center);
		\draw [style=wire, bend left=90, looseness=1.25] (299.center) to (305.center);
		\draw [style=wire, bend left=90, looseness=1.25] (301.center) to (307.center);
	\end{pgfonlayer}
\end{tikzpicture}
\end{equation}
where the subscript ``$a$'' indicates the element in the alphabet $S$ as well as the corresponding duplicate in question, the ordering on the output of each node is given from left to right,  and the uppercase letters indicate what the bijections $\alpha$, $\beta$ are (we use letters instead of numerals to avoid unintended orderings). For left counitality,
\begin{equation}\label{temp:counit}
\begin{tikzpicture}[xscale = .65, yscale = .4, baseline=(current  bounding  box.center)]
\begin{pgfonlayer}{nodelayer}
		\node [style=none] (314) at (8.7, 5.675) {$=$};
		\node [style=none, label={above : $H$}] (316) at (4.5, 6.6) {};
		\node [style=none, label={below : $C$}] (317) at (4.5, 4.925) {};
		\node [style=none] (324) at (11.525, 6.5) {$\bullet$};
		\node [style=none] (325) at (11.525, 5.575) {};
		\node [style=none] (326) at (10.825, 5.05) {};
		\node [style=none, label={below : $C$}] (327) at (10.825, 4.425) {};
		\node [style=none, label={above : $H$}] (328) at (10.1, 6.6) {};
		\node [style=none] (329) at (10.1, 5.575) {};
		\node [style=none] (330) at (7.6, 5.675) {$\Omega_a$};
		\node [style=none] (331) at (2.15, 5.675) {$\Lambda_a$};
		\node [style=none] (332) at (3.15, 5.675) {$=$};
	\end{pgfonlayer}
	\begin{pgfonlayer}{edgelayer}
		\draw [style=wire] (317.center) to (316.center);
		\draw [style=wire] (325.center) to (324.center);
		\draw [style=wire] (327.center) to (326.center);
		\draw [style=wire] (329.center) to (328.center);
		\draw [style=wire, bend left=90, looseness=1.25] (325.center) to (329.center);
	\end{pgfonlayer}
\end{tikzpicture}
\end{equation}
where the dot indicates that the discard $\epsilon_a$ is assigned to the terminal node, similarly for right counitality. For cocommutativity,
\begin{equation}\label{temp:cocom}
\begin{tikzpicture}[xscale = .65, yscale = .4, baseline=(current  bounding  box.center)]
\begin{pgfonlayer}{nodelayer}
		\node [style=none] (314) at (9.2, 5.675) {$=$};
		\node [style=none, label={above : $H$}] (324) at (12.025, 6.625) {};
		\node [style=none] (325) at (12.025, 5.575) {};
		\node [style=none] (326) at (11.325, 5.05) {};
		\node [style=none, label={below : $A$}] (327) at (11.325, 4.425) {};
		\node [style=none, label={above : $C$}] (328) at (10.6, 6.6) {};
		\node [style=none] (329) at (10.6, 5.575) {};
		\node [style=none] (330) at (8.1, 5.675) {$\Omega_a$};
		\node [style=none] (331) at (2.15, 5.675) {$\Lambda_a$};
		\node [style=none] (332) at (3.15, 5.675) {$=$};
		\node [style=none, label={above : $C$}] (333) at (5.95, 6.625) {};
		\node [style=none] (334) at (5.95, 5.575) {};
		\node [style=none] (335) at (5.25, 5.05) {};
		\node [style=none, label={below : $A$}] (336) at (5.25, 4.425) {};
		\node [style=none, label={above : $H$}] (337) at (4.525, 6.6) {};
		\node [style=none] (338) at (4.525, 5.575) {};
	\end{pgfonlayer}
	\begin{pgfonlayer}{edgelayer}
		\draw [style=wire] (325.center) to (324.center);
		\draw [style=wire] (327.center) to (326.center);
		\draw [style=wire] (329.center) to (328.center);
		\draw [style=wire, bend left=90, looseness=1.25] (325.center) to (329.center);
		\draw [style=wire] (334.center) to (333.center);
		\draw [style=wire] (336.center) to (335.center);
		\draw [style=wire] (338.center) to (337.center);
		\draw [style=wire, bend left=90, looseness=1.25] (334.center) to (338.center);
	\end{pgfonlayer}
\end{tikzpicture}
\end{equation}
This may seem different from how cocommutativity is depicted in (\ref{mon:law}). As we have explained in Remark~\ref{free:sys:mo:string}, the crossing is an artifact of representing graphs on a plane and may be dispensed with if we spell out the orderings, which is all that really matters. Finally, for naturality of discard,
\begin{equation}\label{temp:discar}
\begin{tikzpicture}[xscale = .55, yscale = .5, baseline=(current  bounding  box.center)]
\begin{pgfonlayer}{nodelayer}
		\node [style=none] (314) at (7.525, 5.675) {$=$};
		\node [style=none, label={below : $1$}] (324) at (2.15, 5) {};
		\node [style=none] (325) at (2.15, 6.55) {$\bullet$};
		\node [style=none] (328) at (8.9, 6.075) {};
		\node [style=none] (329) at (8.9, 7.1) {$\bullet$};
		\node [style=none] (330) at (6.425, 5.675) {$\Omega_f$};
		\node [style=none] (331) at (-0.325, 5.675) {$\Lambda_f$};
		\node [style=none] (332) at (0.675, 5.675) {$=$};
		\node [style=none, label={below : $1$}] (337) at (8.9, 4.325) {};
		\node [style=none] (338) at (8.9, 5.35) {};
		\node [style=wide small box] (339) at (9.625, 5.75) {$f$};
		\node [style=none] (340) at (10.35, 6.075) {};
		\node [style=none] (341) at (10.35, 7.1) {$\bullet$};
		\node [style=none, label={below : $k$}] (342) at (10.35, 4.325) {};
		\node [style=none] (343) at (10.35, 5.35) {};
		\node [style=none] (344) at (9.625, 6.75) {$\cdots$};
		\node [style=none] (345) at (9.625, 4.7) {$\cdots$};
		\node [style=none, label={below : $k$}] (346) at (3.65, 5) {};
		\node [style=none] (347) at (3.65, 6.55) {$\bullet$};
		\node [style=none] (348) at (2.9, 5.75) {$\cdots$};
	\end{pgfonlayer}
	\begin{pgfonlayer}{edgelayer}
		\draw [style=wire] (325.center) to (324.center);
		\draw [style=wire] (329.center) to (328.center);
		\draw [style=wire] (338.center) to (337.center);
		\draw [style=wire] (341.center) to (340.center);
		\draw [style=wire] (343.center) to (342.center);
		\draw [style=wire] (347.center) to (346.center);
	\end{pgfonlayer}
\end{tikzpicture}
\end{equation}
where the subscript ``$f$'' indicates the arrow  in question and the numerals indicate the ordering on the input of $f$ as well as what the bijection $\alpha$ is (there is no need to depict $\beta$ as the graphs have the empty codomain). We also include the versions that switch the roles of $\Lambda$, $\Omega$ so that the set becomes symmetric. Denote the resulting monoidal congruence relation on $\cat F_s(G)$ by $\bm S_M$.

Let $a = a_1 \ldots a_n$ be an object in $\cat F_s(G)$, where $a_i \in S$. Let $\Gamma$ be the diagram that joins those of $\bigotimes_i \delta_{a_i}$, $(\bigotimes_i 1_{a_i}) \otimes (\bigotimes_i 1_{a_i})$ together by connecting the codomain of the former with the domain of the latter. Of course $\Gamma$ is not unique, as it depends on the order in which the strings are connected. On the other hand, in light of (\ref{temp:cocom}), all such $\Gamma$ represent the same  morphism in the quotient category $\cat F_s(G) / \bm S_M$, which is denoted by $\delta_a$. Also set $\epsilon_a = \bigotimes_i \epsilon_{a_i}$ in $\cat F_s(G) / \bm S_M$. Then the equalities (\ref{mon:law}), (\ref{fins:frob}), and (\ref{discar:nat}) hold in $\cat F_s(G) / \bm S_M$ by construction, in other words, $\cat F_s(G) / \bm S_M$ is a Markov category.

Let $i_s : G \fun \cat F_s(G)$ be the obvious graph monoid homomorphism (see the proofs of \cite[Theorems~1.2, 2.3]{joyal1991geometry} for details); note that either one of the diagrams in (\ref{temp:cocom}) may be designated as the duplicate on $a$ in $\cat F_s(G)$, but the choice is manifestly unnatural, so we do not treat $i_s$ as a graph monoid homomorphism matching duplicates and discards. On the other hand,
\[
i_m = (- / \bm S_M ) \circ i_s: G \fun \cat F_s(G) / \bm S_M
\]
is a graph monoid homomorphism matching duplicates and discards.

Now, let $\cat D$ be a Markov category and $f : G \fun  \cat D$ a graph monoid homomorphism matching duplicates and discards. By Theorem~\ref{free:sym}, there is a strong  symmetric monoidal functor
\[
F_s : \cat F_s(G) \fun \cat D \quad \text{with} \quad f = F_s \circ i_s.
\]
Since $\cat D$ is a  Markov category, by Corollary~\ref{graph:small}, we must have $F_s([\Upsilon]) = F_s([\Gamma])$ if $[\Upsilon] /  \bm S_M =  [\Gamma] /  \bm S_M$. This yields a Markov functor $F_m : \cat F_s(G) /  \bm S_M \fun \cat D$ with $f = F_m \circ i_m$. In summary, we have a commutative diagram
\begin{equation}\label{free:mark:dia}
  \bfig
      \Vtrianglepair(0,0)/<-`->`<-`<-`<-/<700,400>[\cat F_s(G) / \bm S_M`\cat F_s(G)`\cat D`G; - / \bm S_M`F_s`i_m`i_s`f]
      \morphism(0,400)|a|/{@{->}@/^3em/}/<1400,0>[\cat F_s(G) / \bm S_M`\cat D;F_m]
  \efig
\end{equation}
Since $F_s$ is unique up to a unique  monoidal natural isomorphism, so is $F_m$. In conclusion, we have shown:

\begin{thm}\label{free:markov:quot}
The quotient category $\cat F_s(G) / \bm S_M$ is the free Markov category over $G$.
\end{thm}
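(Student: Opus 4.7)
The plan is to verify the universal property by assembling the pieces already laid out in the paragraphs preceding the theorem. There are essentially three things to check: that $\cat F_s(G)/\bm S_M$ is a Markov category with a canonical unit $i_m$ matching duplicates and discards; that any graph monoid homomorphism $f : G \fun \cat D$ into a Markov category $\cat D$ matching duplicates and discards factors through $i_m$ via some Markov functor $F_m$; and that $F_m$ is unique up to a unique monoidal natural isomorphism.

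For the first item, the symmetric strict monoidal structure is inherited from $\cat F_s(G)$ by Theorem~\ref{free:sym}, since $\bm S_M$ is a monoidal congruence. The five Markov templates (\ref{temp:coass}), (\ref{temp:counit}), (\ref{temp:cocom}), (\ref{temp:discar}) are built into $\bm S_M$, so the equations (\ref{mon:law}) and (\ref{discar:nat}) hold in the quotient on generators; lifting to arbitrary objects $a = a_1\ldots a_n$ through the definition of $\delta_a$, $\epsilon_a$ outlined in the discussion preceding the theorem (together with the cocommutativity template, which makes the choice of interconnection pattern irrelevant) yields (\ref{fins:frob}) and the comonoid axioms on every object.

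For the second item, I would invoke Theorem~\ref{free:sym} to produce the strong symmetric monoidal functor $F_s : \cat F_s(G) \fun \cat D$ satisfying $f = F_s \circ i_s$, and then show that $F_s$ descends through the quotient. By Corollary~\ref{graph:small}, $\bm S_M$ equals the monoidal congruence $\bm E_{T_M}$ generated by the template pairs, so it suffices to verify $F_s([\Omega_t]) = F_s([\Lambda_t])$ for each Markov template $t$; but this is exactly one of the Markov axioms already satisfied by $\cat D$, applied to the image under $f$ of the generator in question. The resulting factorization $F_m : \cat F_s(G)/\bm S_M \fun \cat D$ is automatically a Markov functor, because on a generator $a \in S$ the duplicate and discard of the quotient are by construction the images of $\delta_a$ and $\epsilon_a$ under $- /\bm S_M$, which $F_m$ sends to $f(\delta_a)$, $f(\epsilon_a)$, and these match the duplicates and discards in $\cat D$ by hypothesis on $f$; the extension from generators to arbitrary objects is then forced by strong monoidality together with (\ref{fins:frob}) in $\cat D$.

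For uniqueness, any competing Markov functor $F_m'$ with $f = F_m' \circ i_m$ gives rise, via pre-composition with $-/\bm S_M$, to a strong symmetric monoidal functor $F_m' \circ (-/\bm S_M)$ extending $f$ along $i_s$, and the uniqueness clause of Theorem~\ref{free:sym} delivers the desired unique monoidal natural isomorphism, which then descends to the quotient. The only step that is not already transparent from the preamble is the reduction of ``$F_s$ respects $\bm S_M$'' to the axiomatic equalities in $\cat D$, and this is precisely what Corollary~\ref{graph:small} was set up to provide; I expect no serious obstacle beyond that bookkeeping.
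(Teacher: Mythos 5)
Your proposal is correct and follows essentially the same route as the paper: establish the Markov structure on the quotient from the templates and the definition of $\delta_a$, $\epsilon_a$, obtain $F_s$ from Theorem~\ref{free:sym}, use Corollary~\ref{graph:small} to reduce ``$F_s$ respects $\bm S_M$'' to the Markov axioms holding in $\cat D$, and inherit uniqueness from the uniqueness clause of Theorem~\ref{free:sym}. The extra bookkeeping you supply (why $F_m$ is a Markov functor on generators, why the isomorphism descends) is consistent with what the paper leaves implicit.
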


More informatively, this means that the graphical language of  symmetric monoidal categories over $G$, with the additional syntax depicted in (\ref{mon:law}), (\ref{fins:frob}), and (\ref{discar:nat}), up to Markov surgeries  (instead of isomorphisms as in free symmetric monoidal categories and many other cases discussed in \cite{selinger2010survey}) forms the free  Markov category over $G$, which is strict. Also, as pointed out after the proof of \cite[Theorem~1.2]{joyal1991geometry}, there are a unique symmetric strict  monoidal functor $F_s$ and hence such a unique $F_m$ that make (\ref{free:mark:dia}) commute.

\begin{rem}\label{graph:lan:mar}
In terms of coherence, freeness may be reformulated as follows. A well-formed
equation between morphisms in the symbolic language of symmetric monoidal categories
follows from the axioms of Markov categories if and only if it holds, up to Markov surgeries, in the graphical language of symmetric monoidal categories.
\end{rem}

A node $x$ of an anchored diagram $(\Gamma, v)$ in $G$ is \memph{quasi-terminal} if either it is terminal or every maximal directed path $p$ in $\Gamma$ with $p(0) = x$ ends at a terminal node or, in case that $v(x)$ is a duplicate, this is so for all such paths through one of the prongs.  Denote the set of quasi-terminal nodes of $\Gamma$ by $\Delta_\Gamma$ and its complement by  $\tilde \Delta_\Gamma = \Gamma_0 \mi \Delta_\Gamma$.

A \memph{splitter path} in $\Gamma$  is a  concatenation of two directed paths joined at the starting nodes. Denote by $P_\Gamma$ the set of directed paths that end in  $\cod(\Gamma)$ and by $S_\Gamma$ the set of splitter paths between edges in $\cod(\Gamma)$.

For the next two lemmas, suppose that  $(\Gamma, v)$, $(\Upsilon, w)$ are anchored diagrams belonging to the same $\bm S_M$-congruence class.

\begin{lem}\label{decor:mat}
There are
\begin{itemize}
 \item a bijection $\pi : \tilde \Delta_\Gamma \fun \tilde \Delta_{\Upsilon}$ compatible with the valuations, that is, $v(x) = w(\pi(x))$ for all $x \in \tilde \Delta_\Gamma$;
 \item a bijection $\dot \pi :  P_\Gamma \fun P_{\Upsilon}$  compatible with $\pi$, that is, $\pi$ restricts to a bijection between the nodes in $\tilde \Delta_\Gamma$ belonging to $p \in P_\Gamma$ and those in $\tilde \Delta_\Upsilon$ belonging to $\dot \pi(p)$,
 \item a bijection $\ddot \pi : S_\Gamma \fun S_{\Upsilon}$ compatible with $\pi$.
\end{itemize}

\end{lem}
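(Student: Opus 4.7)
The plan is to induct on the length $n$ of a sequence of Markov surgeries witnessing $\Gamma \leftrightsquigarrow_{T_M} \Upsilon$. The base case ($n = 0$, an anchored-diagram isomorphism) is immediate: a graph isomorphism preserves valuations, terminal nodes, directed paths, and splitter paths, so all three bijections transport through it. It therefore suffices to handle a single Markov surgery, say with $\Upsilon$ obtained from $\Gamma$ by one surgery on a normal subgraph $\Gamma' \sub \Gamma$ with template $\Omega / \Lambda$.

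Using Lemmas~\ref{full:lay} and \ref{sur:decom}, enclose $\Gamma'$ in a layer $\Gamma'^\sharp = \Gamma[L,M]$, so that $\Gamma$ and $\Upsilon$ share the outer layers $\Gamma[M, \dot\Gamma]$ and $\Gamma[\0,L]$ and differ only by the replacement of $\Gamma'^\sharp$ by $\Lambda^\sharp$. Since $\Omega$ and $\Lambda$ have matching ordered $\dom$ and $\cod$, and $\Gamma'^\sharp \mi \Gamma' = \Lambda^\sharp \mi \Lambda$, every node outside $\Gamma'$ sees the same adjacent downstream in both diagrams. Consequently its quasi-terminal status is unchanged, and every directed or splitter path confined to the outer layers is literally the same on both sides. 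The identity then accounts for the outside contributions to $\tilde\Delta$, $P$, and $S$; only the template-internal contributions and the segments of paths/splitter paths that cross the template remain to be bijected, separately for each of the four Markov template families.

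Three families are immediate. \emph{Cocommutativity} merely reorders the two prongs of a single duplicate; the underlying node set and unordered path structure are untouched. \emph{Counitality} introduces or removes a duplicate one of whose prongs goes directly to a discard; that duplicate is quasi-terminal via the discard-prong (whose unique maximal path ends at a terminal node), the discard is itself terminal, and neither lies in $\tilde\Delta$ nor participates in an element of $P$ or $S$ using the discard-prong. \emph{Discard naturality} replaces an $f$-node feeding discards by a bank of discards on the inputs; the $f$-node of $\Omega_f$ is quasi-terminal (all maximal paths from it end at discards), the template has empty codomain, and so no template-internal node contributes to $\tilde\Delta$ and no path in $P$ or splitter in $S$ threads its interior.

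The main obstacle is \emph{coassociativity}, where both $\Omega_a$ and $\Lambda_a$ contain two duplicates arranged into differently bracketed binary trees on the common leaves $H, C, A$. The cleanest route I would pursue is a pruning argument: for each side, iteratively delete every prong whose downstream in the top layer consists entirely of paths ending at inner terminal nodes, together with the duplicates thus rendered dead. A direct case analysis on the subset of $\{H, C, A\}$ that survives pruning shows that $\Omega_a$ and $\Lambda_a$ always have the same number of non-quasi-terminal duplicates in the template ($0$, $1$, or $2$), and either the two pruned subtrees are anchored-isomorphic or ($2$-leaf and $3$-leaf cases) admit a canonical matching up to the coassociativity bracketing. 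This matching supplies the internal part of $\pi$, and by tracing paths and splitter paths through the pruned forking pattern one obtains the compatible $\dot\pi$ and $\ddot\pi$; gluing with the identity on the outside completes the inductive step. The subtlest point, which deserves the most care, is the three-productive-leaf case of coassociativity, where the two pruned trees remain non-isomorphic and one must verify by hand that the multisets of non-quasi-terminal-node signatures attached to paths and splitter paths nonetheless coincide under an appropriately chosen~$\pi$.
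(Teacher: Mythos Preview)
Your approach is exactly the one the paper takes: induction on the length of a witnessing sequence of Markov surgeries together with inspection of each template family. The paper's own proof is a single sentence (``All these are immediate by an induction on the least number of Markov surgeries required to get from $\Gamma$ to $\Upsilon$ and inspection of the Markov templates''), so your write-up is already considerably more detailed than what the author supplies; in particular, your observation that the quasi-terminal status of nodes outside the surgery site is unaffected, and your pruning-based case split for coassociativity, are precisely the kind of checks the paper leaves to the reader.
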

\begin{proof}
All these are immediate by an induction on the least number of Markov surgeries required to get from $\Gamma$ to $\Upsilon$ and inspection of the Markov templates.
\end{proof}

Call the diagram $\Gamma$ \memph{Markov minimal} if every node in $\Delta_\Gamma$ is terminal; in that case, surgeries with the templates  (\ref{temp:counit}), (\ref{temp:discar}) can no longer be applied in the $\Omega$-to-$\Lambda$ direction. It follows that there is a Markov minimal anchored diagram in every $\bm S_M$-congruence class.

We say that $\Upsilon$ is \memph{Markov congruent} to $\Gamma$ if they become isomorphic upon surgeries with the templates (\ref{temp:coass}), (\ref{temp:cocom}).

\begin{lem}\label{mar:min:com}
If $\Gamma$, $\Upsilon$ are Markov minimal  then they are Markov congruent.
\end{lem}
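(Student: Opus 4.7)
The plan is to induct on $|\tilde\Delta_\Gamma|$, which by Lemma~\ref{decor:mat} equals $|\tilde\Delta_\Upsilon|$. When $|\tilde\Delta_\Gamma| = 0$, every node of either diagram is a terminal discard, so each diagram is determined up to isomorphism by which domain edges are discarded and which are routed to codomain edges (with some symmetry); two such diagrams representing the same morphism in $\cat F_s(G)/\bm S_M$ are therefore already isomorphic as anchored graphs, hence Markov congruent with no surgeries needed.

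For the inductive step, I would choose a node $x \in \tilde\Delta_\Gamma$ that is minimal in the partial order $\dot\Gamma$. Markov minimality gives two crucial local properties: terminal nodes have no outputs, so nothing in $\Delta_\Gamma$ lies below $x$; and nothing in $\tilde\Delta_\Gamma$ lies below $x$ by the choice of $x$. Consequently every input edge of $x$ comes straight from $\dom(\Gamma)$. Since $x \notin \Delta_\Gamma$, there is a directed path from $x$ to $\cod(\Gamma)$, so $x$ lies on some $p \in P_\Gamma$; applying the bijection $\dot\pi$ of Lemma~\ref{decor:mat} transports $p$ to a path through $\pi(x)$ in $\Upsilon$, and the same reasoning in reverse shows $\pi(x)$ is $\dot\Upsilon$-minimal in $\tilde\Delta_\Upsilon$. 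I would then use the compatibility of $\pi$ with $\dot\pi$ (and with $\ddot\pi$ if $v(x)$ is a duplicate) to verify that, after at most one cocommutativity surgery to reorder the prongs of $x$ when $v(x)$ is a duplicate, the input and output slots of $x$ align with those of $\pi(x)$. Then I would factor off the generator $v(x)$ from both $\Gamma$ and $\Upsilon$, yielding smaller Markov minimal diagrams in the same $\bm S_M$-congruence class, to which the induction hypothesis applies.

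The main technical difficulty is the factorization step: specifically, showing that the input domain edges fed into $x$ agree with those fed into $\pi(x)$, and that the output side of $x$ (which may carry discards when $v(x)$ is a non-duplicate generator with some unused outputs) matches that of $\pi(x)$. The alignment rests entirely on how the three bijections of Lemma~\ref{decor:mat} constrain the local picture at $x$, and I expect the duplicate case --- where the splitter paths tracked by $\ddot\pi$ become essential for determining the orientation of the prongs --- to be the most delicate to verify, since it requires confirming that no residual freedom beyond cocommutativity remains.
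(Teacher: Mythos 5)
Your approach is genuinely different from the paper's, and it has a gap at its crux. The paper proves this lemma by taking a \emph{shortest} sequence of Markov surgeries from $\Gamma$ to $\Upsilon$, tracking the number of quasi-terminal nodes of the intermediate diagrams, and showing that minimality of the sequence forces every surgery of type (\ref{temp:counit}) or (\ref{temp:discar}) to be removable (together with a partner surgery); hence a minimal sequence between two Markov minimal diagrams uses only the templates (\ref{temp:coass}) and (\ref{temp:cocom}), which is exactly Markov congruence. You instead try to reconstruct the congruence structurally, by matching nodes via Lemma~\ref{decor:mat} and peeling them off one at a time.

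The decisive gap is the factorization step. Having aligned $x$ with $\pi(x)$, you assert that deleting them yields ``smaller Markov minimal diagrams in the same $\bm S_M$-congruence class.'' Writing $[\Gamma]=[\Gamma_1]\circ(1\otimes v(x))$ and $[\Upsilon]=[\Upsilon_1]\circ(1\otimes v(x))$, what you need is $[\Gamma_1]\leftrightsquigarrow_{\bm S_M}[\Upsilon_1]$, i.e.\ that the generator $v(x)$ can be cancelled on the right in the quotient category. Generators are not epimorphisms there, and neither Lemma~\ref{decor:mat} nor anything else in your argument supplies this; it is essentially as hard as the lemma you are proving. A second, related problem is that the invariants $\pi$, $\dot\pi$, $\ddot\pi$ are too coarse to carry the alignment you ask of them: compatibility only matches the \emph{sets} of $\tilde\Delta$-nodes lying on corresponding paths, not the edge-level wiring. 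For example, with generators $f\colon a\to a\otimes a$ and $g\colon a\otimes a\to a$, the two Markov minimal diagrams for $g\circ f$ and $g\circ\sigma_{aa}\circ f$ admit bijections $\pi,\dot\pi,\ddot\pi$ as in Lemma~\ref{decor:mat} yet are not Markov congruent (no surgery applies to either). So matching invariants cannot by themselves certify congruence; the hypothesis $\Gamma\leftrightsquigarrow_{\bm S_M}\Upsilon$ must be exploited in a more quantitative way, which is precisely what the paper's shortest-sequence argument does. Your base case suffers from the same issue in miniature: that all nodes are terminal does not, without further use of the surgery sequence, determine which domain edges are discarded into which nodes or what permutation connects the surviving wires.
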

\begin{proof}
We show this by an induction on the least number $n$ of Markov surgeries needed to beget $\Upsilon$ from $\Gamma$. The base case $n = 1$ is clear. For the inductive step, let $\sigma_1, \ldots, \sigma_n$ be a sequence of Markov surgeries that begets $\Upsilon$ from $\Gamma$ and $\Gamma_i$ the result of $\sigma_i$. Let $k_i$ be the size of $\Delta_{\Gamma_i}$. We may assume $k_i > 0$ for all $i < n$, for otherwise the claim would follow from the inductive hypothesis. Let $l < n-1$ be the least number such that $k_{l}, \ldots, k_n$ are strictly decreasing. So, for every $i > l$,  $\sigma_i$ is a surgery with the template  (\ref{temp:counit}) or (\ref{temp:discar}) applied in the $\Omega$-to-$\Lambda$ direction. Now, if $k_{l-1} = k_l$ then $\sigma_l$ is a surgery with the template (\ref{temp:coass}) or (\ref{temp:cocom}), in which case we can delete $\sigma_l$ and modify the surgeries $\sigma_i$, $i > l$, accordingly so to obtain a shorter sequence that begets $\Upsilon$ from $\Gamma$, contradicting the choice of $n$. Similarly, if $k_{l-1} < k_l$ then $\sigma_l$ is a surgery with the template (\ref{temp:counit}) or (\ref{temp:discar}) applied in the $\Lambda$-to-$\Omega$ direction, in which case we can delete $\sigma_l$ together with $\sigma_{l'}$ for some $l < l' \leq n$ and obtain a contradiction. So $k_i = 0$ for all $i$.  The lemma follows.
\end{proof}

\section{Effects in a Markov category}

Generalizing the construction of a causal conditional in \cite[\S~4]{Fong:thesis}, we shall define a class of morphisms in an arbitrary Markov category $\cat M$ that will be of central interest.

We first assume that $\cat M$ is \memph{straight}; this just means that $\cat M$ is strict and the monoid $(\cat M_0, \otimes, \moi)$ has no idempotents or elements of finite order  other than $\moi$; these two conditions are met in many natural situations where the results of this paper are intended for application.  Note that the second condition holds if and only if $w^n \neq w^m$ for all $w \neq \moi$ and all $n \neq m > 0$; here $w^n$ is a shorthand for  the  monoidal product of $n$ copies of $w$ itself; this includes the empty product $w^0 = \moi$.

\begin{defn}
Let $W = ( w_1, \ldots, w_n )$ be a sequence of objects  and $w = \bigotimes_i w_i$. A morphism $w \fun v$ in $\cat M$ is called a \memph{multiplier} on $(W, v)$  if it is generated from the duplicates, discards, symmetries, and identities on $w_i$, $1 \leq i \leq n$.
\end{defn}

\begin{rem}\label{mul:pow}
The straightness of $\cat M$ and (\ref{mon:law}) or, more intuitively, the coherence of  the graphical language for Markov categories that has been established above, guarantee that if $n = 1$ then there is a unique multiplier on $(w, w^m)$, which is denoted by $\iota_{w \rightarrow w^m}$. In that case, the multiplier may be depicted as a diagram that contains exactly one terminal node when $m = 0$, no node when $m = 1$, and $i$ nodes for $1 \leq i \leq m -1$ when $m > 1$, where $i$ is chosen to be conducive to  the situation at hand.
\end{rem}

\begin{rem}\label{uni:pro}
Suppose that $(\cat M_0, \otimes, \moi)$ is indeed a free monoid.

If every $w_i$ is in the alphabet and $w_i \neq w_j$ for all $i \neq j$ then there is again a unique multiplier $\iota_{\bar w \rightarrow v}$ on $(\bar w, v)$. For instance, if  $w = w_1 w_2 w_3$ and $v = w_1^2  w_2   w_1  w_2$ then $\iota_{\bar w \rightarrow v}$ may be depicted as the Markov minimal diagram
$\begin{tikzpicture}[xscale = .3, yscale = .3, baseline={([yshift=8pt]current bounding box.south)}]
\begin{pgfonlayer}{nodelayer}
		\node [style=none] (108) at (4.275, 0.15) {};
		\node [style=none] (110) at (7.325, 0.15) {};
		\node [style=none] (111) at (5.275, -1.575) {};
		\node [style=none] (112) at (4.275, -0.1) {};
		\node [style=none] (113) at (6.275, -0.65) {};
		\node [style=none] (114) at (5.275, -0.95) {};
		\node [style=none] (117) at (6.175, 0.15) {};
		\node [style=none] (118) at (8.8, 0.15) {};
		\node [style=none] (119) at (7.975, -1.575) {};
		\node [style=none] (120) at (7.125, -0.6) {};
		\node [style=none] (121) at (8.775, -0.35) {};
		\node [style=none] (122) at (7.975, -1.075) {};
		\node [style=none] (124) at (10.2, -1.55) {};
		\node [style=none] (150) at (10.2, 0.025) {$\bullet$};
		\node [style=none] (151) at (5.275, 0.15) {};
		\node [style=none] (152) at (4.4, -1.5) {$w_1$};
		\node [style=none] (153) at (7, -1.55) {$w_2$};
		\node [style=none] (154) at (9.2, -1.5) {$w_3$};
	\end{pgfonlayer}
	\begin{pgfonlayer}{edgelayer}
		\draw [style=wire, in=-135, out=-90] (112.center) to (113.center);
		\draw [style=wire] (108.center) to (112.center);
		\draw [style=wire, in=45, out=-90, looseness=1.25] (110.center) to (113.center);
		\draw [style=wire, in=270, out=90] (111.center) to (114.center);
		\draw [style=wire, in=-90, out=-60, looseness=1.25] (120.center) to (121.center);
		\draw [style=wire, in=120, out=-90] (117.center) to (120.center);
		\draw [style=wire, in=90, out=-90, looseness=1.25] (118.center) to (121.center);
		\draw [style=wire] (119.center) to (122.center);
		\draw [style=wire] (124.center) to (150.center);
		\draw [style=wire] (114.center) to (151.center);
	\end{pgfonlayer}
\end{tikzpicture}$, where how the  duplicates in the trident are arranged, how the edges at the nodes are ordered, how the copies of the same object in the codomain are ordered, and so on, can all be left unspecified without any ambiguity.

If $w_i = w_j$ for some $i \neq j$ then multipliers may not unique. In fact, this is so even if $w_i \neq w_j$ for all $i \neq j$, because there may be ``algebraic relations'' among $w_i$, $1 \leq i \leq n$, for example, $w_1w_2 = w_3$, unless every $w_i$ is in the alphabet, which then is the situation just discussed.
\end{rem}

A \memph{loop} in a directed graph  $G = (V(G), A(G), s, t)$ is an arrow in  $A(G)$ whose source and target coincide. Note that  $G$ could still be acyclic even if it has loops, because a cycle in $G$ does not contain loops by definition.  We shall work with directed graphs $G$ such that there is exactly one loop at each vertex in $G$, which is referred to as the \memph{identity loop} on $v$ and is denoted by $1_v$, for reasons that will become clear.

The category \cat{FinDAG} has finite directed acyclic graphs with identity loops as objects and graph homomorphisms between them as morphisms.

We have a more flexible notion of morphisms in \cat{FinDAG}, because  arrows can be collapsed due to the presence of identity loops. For instance, under the standard definition, there can be no morphisms from any directed graph with a nonempty set of arrows to the directed graph with exactly one vertex; on the other hand, the one-vertex graph is the terminal object in \cat{FinDAG} as defined above, that is, every object in \cat{FinDAG} comes with exactly one morphism to it.

\begin{defn}\label{gen:condi}
Let $W = (w_1, \ldots, w_n)$ be a sequence of objects with $w_i \neq \moi$ for all $i$. For each $S \sub \{1, \ldots, n\}$, let $w_S = {\bigotimes_{i \in S}}  w_i$, where $w_\0 = \moi$. For each $i$, let $\kappa_{i} : w_{S_i} \fun w_i$ be a morphism with $i \notin S_i$; let $K$ be the set of these morphisms $\kappa_{i}$. We associate a directed graph $G$ with the pair $(W, K)$: the set $V(G)$ of vertices is $\{1, \ldots, n\}$  and there is an arrow $j  \rightarrow i$ if and only if  $j \in S_i$. Of course $G$ may or may not be cyclic. After adding identity loops, we do assume $G \in \cat{FinDAG}$. Among other things, this implies $S_i = \0$ for some $i$; in that case, $\kappa_i$ shall be depicted as $\begin{tikzpicture}[xscale = .3, yscale = .3, baseline={([yshift=6pt]current bounding box.south)}]
\begin{pgfonlayer}{nodelayer}
		\node [style=none] (124) at (9.825, -1.55) {$\bullet$};
		\node [style=none] (150) at (9.825, 0.025) {};
	\end{pgfonlayer}
	\begin{pgfonlayer}{edgelayer}
		\draw [style=wire] (124.center) to (150.center);
	\end{pgfonlayer}
\end{tikzpicture}$.

Let $S, T \sub V(G)$. Let $G_{S \rightarrow T}$ be the subgraph of $G$ that consists of all the vertices in $S \cup T$ and all the directed paths that end  in $T$ but do not \memph{travel toward} $S$, that is, do not pass through or end in $S$ (starting in $S$ is allowed). Note that, for every  $i \in V(G_{S \rightarrow T})$, if $i \notin S$ then its parents in $G$ are all in $G_{S \rightarrow T}$ as well and if $i \in S$ then it has no parents in $G_{S \rightarrow T}$.

Construct a diagram in $\cat M$  as follows. For each $i \in V(G_{S \rightarrow T})$, let $\bar w_i$ be the monoidal product of as many copies of $w_i$ as the number of children of $i$ in $G_{S \rightarrow T}$. Let $\stri_i$ be a  diagram of
\[
\begin{dcases*}
   \iota_{w_i \rightarrow \bar w_i w_i}                & if $i \in S \cap T$,\\
   \iota_{w_i \rightarrow \bar w_i}                            & if $i \in S \mi T$,\\
  \iota_{w_i \rightarrow \bar w_i w_i} \circ \kappa_i  & if $i \in T \mi S$,\\
  \iota_{w_i \rightarrow \bar w_i} \circ \kappa_i              & if $i \notin S \cup T$,
\end{dcases*}
\]
where in the last cases the diagram for $\kappa_i$ is the obvious one; note the extra copy of $w_i$ in the codomain of $\iota_{w_i \rightarrow \bar w_i w_i}$. Recall from Remark~\ref{mul:pow} that the multipliers used here are unique and there is no need to specify orderings for their codomains in the diagrams.  For each $j \in V(G_{S \rightarrow T})$, let $o_j$ be the number of edges in the codomain of  $\stri_j$  and $p_j$ the number of all the edges with the value $w_j$ in the domains of all the other diagrams $\stri_i$. Observe that  $o_j = p_j + 1$ if $j \in T$ and  $o_j = p_j$ in all other cases. So we may identify the corresponding edges and fuse these \memph{components} $\stri_i$ together into a single  diagram, denoted by $\stri_{[w_T \| w_S]_{(W, K)}}$, with the domain $w_S$ and the codomain $w_T$.

The diagram thus obtained is Markov minimal. It may not be unique up to isomorphisms, but is unique up to Markov congruence. So it represents a unique morphism, which is referred to as the \memph{$(W, K)$-effect} of $w_S$ on $w_T$ and is denoted by $[w_T \| w_S]_{(W, K)} : w_S \fun w_T$, or simply $[w_T]_{(W, K)}$ when $S = \0$.
\end{defn}

We shall write $[w_T \| w_S]$ simply as $[T \| S]$ when the context is clear. The subscript $(W, K)$ may be dropped from the notation too.

\begin{lem}\label{graph:split}
For every $i \in V(G_{S \rightarrow T}) \mi S$ there are a level $L$  of $\stri_{[T \| S]}$ and a subset $T_i$ of $V(G_{S \rightarrow T})$ containing $i$ such that
\begin{itemize}
  \item $V(G_{S \rightarrow T_i})$ does not contain any child of $i$ in $G_{S \rightarrow T}$,
  \item $\stri_{[T \| S]}[\0, L] = \stri_{[{T_i} \| S]}$ and $\stri_{[T \| S]}[L, \dot \stri_{[T \| S]}] = \stri_{[{T} \| {T_i}]}$.
\end{itemize}

\end{lem}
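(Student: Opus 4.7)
The plan is to construct $T_i$ and the level $L$ explicitly from the DAG structure of $G_{S \rightarrow T}$, and then verify the two bulleted assertions separately. Write $D_i$ for the set of strict descendants of $i$ in $G_{S \rightarrow T}$, that is, vertices reachable from $i$ along a directed path of positive length inside $G_{S \rightarrow T}$.

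I set
\[
T_i := \{i\} \cup \{j \in V(G_{S \rightarrow T}) \mi (D_i \cup \{i\}) : j \in T \text{ or } j \text{ has a child in } D_i\},
\]
so $i \in T_i$ by construction. For the first bullet, let $c$ be any child of $i$ in $G_{S \rightarrow T}$; then $c \in D_i$ and hence $c \notin T_i$. The edge $i \rightarrow c$ sits on a directed path that ends in $T$ and does not travel toward $S$, which forces $c \notin S$. If $c$ belonged to $V(G_{S \rightarrow T_i})$, it would therefore have to lie on some directed path in $G$ ending in $T_i$ and not travelling toward $S$; but every onward path from $c$ stays inside $D_i \cup \{c\}$, whereas $T_i$ is disjoint from $D_i$ by definition, a contradiction.

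For the level $L$ and the two layer equalities, the idea is to cut $\stri_{[T\|S]}$ immediately above the $\kappa$-nodes of the vertices that should sit in the lower layer. Because the precise placement of $L$ inside the multiplier subtree of a bridge component $\stri_j$ (for $j \in T_i$) only affects Markov-congruent rearrangements of that subtree, I may work with a Markov-minimal presentation of $\stri_{[T\|S]}$ in which, for each $j \in T_i$, the multiplier subtree of $\stri_j$ is factored so that a single copy of $w_j$ leaves the component via one edge while the remaining fan-out to its children in $D_i$ (together with the output to $\cod$ when $j \in T$) is performed afterwards. The level $L$ is then defined to contain, for every $j \in V(G_{S \rightarrow T}) \mi D_i \mi S$, the $\kappa_j$-node, together with every multiplier-node of $\stri_j$ when $j \notin T_i$, and only the initial duplicate-node producing the distinguished early copy of $w_j$ when $j \in T_i$.

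That $L$ is a level reduces, by unfolding an inequality $y < x$ inside $\dot\stri_{[T\|S]}$ as a directed path and projecting it to $G_{S \rightarrow T}$, to the observation that $V(G_{S \rightarrow T}) \mi D_i$ is closed under taking ancestors in $G_{S \rightarrow T}$; the latter is immediate because $D_i$ is closed under going downward. For the two equalities, I read off the components of the lower layer $\stri_{[T\|S]}[\0, L]$: for each $j \in V(G_{S \rightarrow T_i}) \mi T_i$ the full component $\stri_j$ sits below $L$ and agrees with its counterpart in $\stri_{[T_i\|S]}$; for each $j \in T_i$ the $\kappa_j$-node plus the early duplicate-node assemble into the component of $\stri_{[T_i\|S]}$ indexed by $j$, whose multiplier inside $G_{S \rightarrow T_i}$ has no children to fan out to and produces only the single codomain copy. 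The upper layer is treated dually, yielding $\stri_{[T\|T_i]}$ componentwise, and both equalities then hold on the level of diagrams after invoking the Markov congruences furnished by Lemma~\ref{mar:min:com}.

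The main technical obstacle is the bookkeeping of the multiplier subtrees inside each bridge component $\stri_j$ for $j \in T_i$: in $\stri_{[T\|S]}$ the subtree fans one input $w_j$ out to $m_j := |\{\text{children of } j \text{ in } G_{S \rightarrow T}\}| + [j \in T]$ output copies, whereas in the composed form it must decompose as one cut-output in the lower layer followed by a further multiplier producing the remaining copies in the upper layer. The uniqueness of multipliers from Remark~\ref{mul:pow}, combined with Lemma~\ref{mar:min:com}, identifies all such rearrangements by Markov congruence, but committing to a specific tree structure for the bridge multipliers at the outset is what makes the two equalities literal, rather than only up to Markov congruence a posteriori.
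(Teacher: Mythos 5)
Your route is genuinely different from the paper's. The paper takes $L$ to be the down-set of the least node of the single component $\stri_i$ (the construction from the proof of Lemma~\ref{full:lay} applied to $\stri_i$) and then \emph{reads off} $T_i$ from $\cut(L)$, so its $T_i$ consists of $i$ together with certain ancestors of $i$; you instead fix $T_i$ combinatorially from the descendant set $D_i$ and build $L$ to match, which yields a larger $T_i$ (it contains all of $T \mi (D_i \cup \{i\})$ as well as the external parents of $D_i$). Since the lemma only asserts existence, this is legitimate, and your first bullet essentially goes through, provided the ``onward paths from $c$'' are tracked inside $G_{S \rightarrow T}$ so that their vertices really land in $D_i$.

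There are, however, concrete gaps in the second half. First, $L$ as you describe it is not a level: you only assign nodes of components $\stri_j$ with $j \notin D_i \cup S$ to $L$, but a source component $\stri_s$ with $s \in S$ is a pure multiplier whose nodes can strictly precede a $\kappa_k$-node that you do place in $L$ (take $s \in S \cap T_i$ with one child in $D_i$ and another child $k \notin D_i$; the duplicate node of $\stri_s$ lies below $\kappa_k \in L$ but is excluded from $L$), so the multiplier nodes of the $S$-components must also be partitioned between the layers. Second, ``only the initial duplicate-node'' is wrong in general: for $j \in T_i$ the part of $\stri_j$ below the cut needs as many duplicate nodes as $j$ has children outside $D_i$ --- several when there are several such children, and none at all when there are none, e.g.\ for $j = i$ itself, whose entire multiplier must lie above the cut. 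Third, the two displayed equalities are where the content of the lemma lives, and you only assert them: one must check, vertex by vertex, that the multiplier arity of each component of the lower (resp.\ upper) layer equals the number of children of that vertex in $G_{S \rightarrow T_i}$ (resp.\ $G_{T_i \rightarrow T}$), plus the extra codomain copy exactly when the vertex lies in $T_i$ (resp.\ $T$); with your larger $T_i$ this still requires an argument (for instance, that a child $k \notin D_i$ of $j \in T_i$ is a child of $j$ in $G_{S \rightarrow T_i}$ but not in $G_{T_i \rightarrow T}$, and dually for children in $D_i$). This case analysis is what occupies most of the paper's proof; as written, your treatment of the second bullet is a plausible sketch rather than a proof.
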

\begin{proof}
To begin with, note that the component $\Gamma_i \sub \stri_{[T \| S]}$ is a normal subgraph. Let  $L$ be the level  of $\stri_{[T \| S]}$ as constructed in the latter half of the proof of Lemma~\ref{full:lay}; more precisely, $L$ consists of those nodes $x \in \dot \stri_{[T \| S]}$ such that $x \leq y_i \in (\stri_i)_0$, where $y_i$ is the least node in $\dot \Gamma_i$, which exists if $(\Gamma_i)_0 \neq \0$, in particular, if $i \notin S$. Let $T_i \sub V(G_{S \rightarrow T})$ such that  $j \in T_i$ if and only if there is an edge in $\cut(L)$ with the value $w_j$. So $i \in T_i$, and no child $k$ of $i$ in $G_{S \rightarrow T}$ can be an ancestor of any $j \in T_i$ in $G$, in other words, $k \notin  V(G_{S \rightarrow T_i})$.  Let $S_i \sub S$ such that  $j \in S_i$ if and only if there is an edge in $\cut(\0) \cap \cut(L)$, that is, a loose edge of $\stri_{[T \| S]}[\0, L]$, with the value $w_j$. So  every vertex in $(T_i \cup S) \mi S_i$ is an ancestor of $i$ in $G_{S \rightarrow T}$ (vertices are ancestors of themselves).

Consider any $j \in V(G_{S \rightarrow T_i})$. Assume $j \notin S_i$.  If $j \notin T_i \cup S$ then it must be an ancestor of some $k \in T_i \mi S_i$ in $G_{S \rightarrow T_i}$. So, at any rate, $j$ is an ancestor of $i$ in $G_{S \rightarrow T}$. It follows that if $j \notin T_i$ then the component $\Gamma_j$ of $\stri_{[T \| S]}$ is entirely contained in $\stri_{[T \| S]}[\0, L]$  and, more importantly, we have $e(1) \in L$ for every $e \in (\Gamma_j)_1$ and hence $\Gamma_j$ may be computed with respect to $G_{S \rightarrow T_i}$ too, that is, it may be regarded as a component of $\stri_{[{T_i} \| S]}$. If $j \in T_i$ then we may choose a suitable diagram of the multiplier in question (recall the last sentence of Remark~\ref{mul:pow}) and thereby assume that there is exactly one edge in $\cut(L)$ with the value $w_j$. In that situation, the portion of the component $\Gamma_j$ of $\stri_{[T \| S]}$ contained in $\stri_{[T \| S]}[\0, L]$ is again the component $\Gamma_j$ of $\stri_{[{T_i} \| S]}$.

Now assume $j \in S_i$. Then, by the discussion above, any child of $j$ in $G_{S \rightarrow T_i}$ would be an ancestor of $i$ in $G_{S \rightarrow T}$, which contradicts the definition of $S_i$. So  the component $\Gamma_j$ of $\stri_{[T_i \| S]}$ depicts the identity of $w_j$, which of course is the same as the loose edge in $\cut(\0) \cap \cut(L)$.

Finally, if $w_k$ is the value of an edge of   $\stri_{[T \| S]}[\0, L]$ then $k$ must belong to $V(G_{S \rightarrow T_i})$. So we have shown $\stri_{[T \| S]}[\0, L] = \stri_{[{T_i} \| S]}$.

By the choice of $\Gamma_j$ for each $j \in T_i \mi S_i$ made above, we may identify  $\cut(L)$  with $T_i$. Then a similar analysis shows that $\stri_{[T \| S]}[L, \dot \stri_{[T \| S]}] = \stri_{[{T} \| {T_i}]}$.
\end{proof}

\begin{prop}\label{mar:fun:eff}
Let $\cat N$ be another straight Markov category and $F : \cat M \fun \cat N$ a strict Markov functor. Then
\[
F([T \| S]_{(W, K)}) = [T \| S]_{(F(W), F(K))}.
\]
\end{prop}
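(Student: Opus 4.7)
The plan is to verify that $F$ commutes with each step of the construction of the diagram $\stri_{[T \| S]_{(W,K)}}$, and then to invoke the uniqueness up to Markov congruence recorded in Definition~\ref{gen:condi} in order to conclude equality of morphisms.

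First I would check that a strict Markov functor preserves the basic ingredients used to build multipliers: identities, composition, the monoidal product on objects and arrows, the unit $\moi$, symmetries, duplicates, and discards. The last two hold by definition of a Markov functor (with discards preserved automatically since $\moi$ is terminal), and strictness makes the structure isomorphisms trivial. A short induction on how a multiplier is generated then gives $F(\iota_{w \rightarrow w^m}) = \iota_{F(w) \rightarrow F(w)^m}$, and more generally $F$ sends any multiplier in $\cat M$ to the corresponding multiplier in $\cat N$; the uniqueness of $\iota_{w \rightarrow w^m}$ in $\cat N$, needed for this statement to be unambiguous, is supplied by the straightness of $\cat N$ (Remark~\ref{mul:pow}).

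Next I would observe that the directed graph $G$ associated with $(W, K)$ depends only on the incidence data $j \in S_i \Leftrightarrow j \to i$, which is transported verbatim to $(F(W), F(K))$ since $F$ is strict on objects. Consequently the subgraphs $G_{S \rightarrow T}$, the counts of children, and hence the arities $\bar w_i$ of each multiplier coincide on the two sides. It follows that each component $\stri_i$ prescribed for $(F(W), F(K))$ agrees with the image under $F$ of the corresponding component built for $(W, K)$: in the cases involving a $\kappa_i$, the component becomes $\iota_{F(w_i) \rightarrow F(\bar w_i) F(w_i)} \circ F(\kappa_i)$ (or its counit variant) by the previous step combined with functoriality, and the pure-multiplier cases transfer directly. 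Fusing these components along matching edges is an assignment of valuations that $F$ clearly respects, so $F$ applied to $\stri_{[T \| S]_{(W,K)}}$ yields a diagram satisfying the defining conditions of $\stri_{[T \| S]_{(F(W), F(K))}}$.

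Both diagrams are Markov minimal by construction, so by Lemma~\ref{mar:min:com} they are Markov congruent and in particular represent the same morphism of $\cat N$, yielding the claimed equality. There is no serious obstacle: the proof is essentially bookkeeping, with the only mildly subtle points being (i) that strictness plus duplicate/discard preservation is genuinely enough to push multipliers through $F$, even when several $w_i$ share object labels, and (ii) the implicit assumption that $F(w_i) \neq \moi$ so the right-hand side satisfies the hypothesis of Definition~\ref{gen:condi}; the latter is either stipulated in the statement or handled by noting that, should $F(w_i) = \moi$ occur, the associated components collapse harmlessly to identities and discards on $\moi$.
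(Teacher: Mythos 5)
Your proof is correct, but it takes a genuinely different route from the paper. The paper argues by induction on the height of $G_{S \rightarrow T}$: it uses Lemma~\ref{graph:split} to factor $[T \| S]$ as $[T \| T_j] \circ [T_j \| S]$, applies $F$ to the composition, and only verifies the claim by hand in the base cases $h = 0, 1$ where $\stri_{[T\|S]}$ has the explicit form~(\ref{height:1}). You instead work globally at the level of the diagram: you observe that the associated graph $G$, the subgraph $G_{S\rightarrow T}$, and the arities $\bar w_i$ are identical on both sides, that $F$ carries each component $\stri_i$ to the corresponding component for $(F(W),F(K))$ (multipliers go to multipliers by strictness plus preservation of duplicates/discards/symmetries, with uniqueness supplied by straightness of $\cat N$ as in Remark~\ref{mul:pow}), and that fusion commutes with re-valuation; uniqueness up to Markov congruence from Definition~\ref{gen:condi} then closes the argument. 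What your approach buys is a shorter, decomposition-free proof that makes the ``same graph, transported valuation'' idea explicit; what it costs is that you lean on the general principle that a strict symmetric monoidal functor commutes with the evaluation of an arbitrary string diagram, which the paper never states in that generality --- its height induction is essentially a self-contained verification of exactly that compatibility for these particular diagrams, reducing everything to compositions of layers of height at most one. Two small points of care: your appeal to Lemma~\ref{mar:min:com} at the end is a slight overreach, since that lemma concerns diagrams in a free graph monoid rather than diagrams valued in $\cat M$ --- the uniqueness clause of Definition~\ref{gen:condi} is the right thing to cite; and your flagging of the possibility $F(w_i) = \moi$ is a legitimate observation that the paper itself does not address.
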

\begin{proof}
The directed acyclic graph associated with $(F(W), F(K))$ is also $G$ and hence we may take the diagrams $\stri_{[T \| S]_{(W, K)}}$, $\stri_{[T \| S]_{(F(W), F(K))}}$ to have the same underlying graph with different valuations in different categories.

The \memph{height} of a directed acyclic graph is the maximum number of edges in a directed path.  We proceed by induction on the height $h$ of $G_{S \rightarrow T}$.

The case $h = 0$ is rather trivial. The case $h = 1$  cannot be reduced to the case $h = 0$, but it is straightforward to check since $\stri_{[T \| S]}$ is of the simple form
\begin{equation}\label{height:1}
\begin{tikzpicture}[xscale = .55, yscale = .55, baseline={(current bounding box.center)}]
\begin{pgfonlayer}{nodelayer}
		\node [style=none] (114) at (4.15, -1.725) {};
		\node [style=none] (119) at (7.75, -1.15) {};
		\node [style=none] (122) at (7.75, -0.925) {};
		\node [style=none] (124) at (5.475, -1.725) {};
		\node [style=none] (150) at (5.475, 0.025) {$\bullet$};
		\node [style=none] (151) at (4.15, 0.125) {};
		\node [style=small box] (152) at (8.3, -0.575) {$\kappa_{T^*}$};
		\node [style=none] (155) at (8.3, -0.35) {};
		\node [style=none] (156) at (8.3, 0.5) {};
		\node [style=none] (157) at (6.625, -1.15) {};
		\node [style=none] (158) at (6.625, 0.175) {};
		\node [style=none] (160) at (7.2, -1.925) {};
		\node [style=none] (161) at (7.2, -1.475) {};
		\node [style=none] (162) at (8.3, -1.925) {};
		\node [style=none] (163) at (8.3, -0.925) {};
		\node [style=none] (164) at (8.85, -1.15) {};
		\node [style=none] (165) at (8.85, -0.925) {};
		\node [style=none] (166) at (9.975, -1.15) {};
		\node [style=none] (167) at (9.975, 0.175) {};
		\node [style=none] (168) at (9.4, -1.875) {$\bullet$};
		\node [style=none] (169) at (9.4, -1.475) {};
		\node [style=none] (170) at (11.175, 0.025) {};
		\node [style=none] (171) at (11.175, -1.875) {$\bullet$};
	\end{pgfonlayer}
	\begin{pgfonlayer}{edgelayer}
		\draw [style=wire] (119.center) to (122.center);
		\draw [style=wire] (124.center) to (150.center);
		\draw [style=wire] (114.center) to (151.center);
		\draw [style=wire] (155.center) to (156.center);
		\draw [style=wire] (157.center) to (158.center);
		\draw [style=wire, in=-90, out=-90] (157.center) to (119.center);
		\draw [style=wire] (160.center) to (161.center);
		\draw [style=wire] (162.center) to (163.center);
		\draw [style=wire] (164.center) to (165.center);
		\draw [style=wire] (166.center) to (167.center);
		\draw [style=wire, in=-90, out=-90] (166.center) to (164.center);
		\draw [style=wire] (168.center) to (169.center);
		\draw [style=wire] (170.center) to (171.center);
	\end{pgfonlayer}
\end{tikzpicture}
\end{equation}
where $T^*$ is the set of those vertices $i \in T \mi S$ with $\dom(\kappa_i) \neq \0$ and $\kappa_{T^*} = \bigotimes_{i \in T^*} \kappa_i$; note that not all the multipliers are displayed herein.

For the inductive step $h > 1$, let $i$, $j$, and $k$ be three distinct vertices in $G_{S \rightarrow T}$ such that each is a child of the next. Let $T_j$ be as given by Lemma~\ref{graph:split}. Although $i$ does not belong to $G_{S \rightarrow T_j}$, the height of $G_{S \rightarrow T_j}$ may still be $h$. Anyway, if the heights of $G_{S \rightarrow T_j}$, $G_{T_j \rightarrow T}$ are indeed both less than $h$ then, by the inductive hypothesis,
\[
F([{T_j} \| S]_{(W, K)}) = [T_j \| S]_{(F(W), F(K))}, \quad F([{T} \| T_j]_{(W, K)}) = [T \| T_j]_{(F(W), F(K))}
\]
and hence, by Lemma~\ref{graph:split},
\[
\begin{split}
F([T \| S]_{(W, K)}) & = F([{T} \| T_j]_{(W, K)} \circ [T_j \| S]_{(W, K)})\\
                                    & = [T \| T_j]_{(F(W), F(K))} \circ [T_j \| S]_{(F(W), F(K))} \\
                                    & = [T \|  S]_{(F(W), F(K))},
\end{split}
\]
where the last equality holds because the construction of $T_j$ only depends on $G_{S \rightarrow T}$, not the diagram. If either of the heights is equal to $h$ then we can break  up the graph again in the same way, and eventually the height will have to drop.
\end{proof}

More generally, we shall consider  \memph{twists} of $[T \| S]$, that is, a morphism of the form
\[
[\tau(T) \| \sigma(S)] = \iota_{w_T \rightarrow w_{\tau(T)}} \circ [T \| S] \circ \iota_{w_{\sigma(S)} \rightarrow w_S},
\]
where $\sigma$, $\tau$ are permutations of $S$, $T$ and $\iota_{w_T \rightarrow w_{\tau(T)}}$, $\iota_{w_{\sigma(S)} \rightarrow w_S}$ are the unique multipliers generated from symmetries. Such a twist is  determined by the pair of permutations.

\begin{rem}
The  construction in Definition~\ref{gen:condi} can still be carried out without assuming straightness, albeit not canonically, because we may have to choose monoidal products if $\cat M$ is not strict and multipliers if $w^n = w^m$ could hold for $w \neq \moi$ and $n \neq m > 0$.
\end{rem}

All is not lost for an arbitrary Markov category $\cat M$. Following the discussion in \cite[\S~1.1]{joyal1991geometry}, we may first strictify  $\cat M$ into $\str(\cat M)$ as follows (this is a special case of a construction in \cite[\S~2]{maclane1985coherence}).  The objects of $\str(\cat M)$ are the words of the free monoid $W(\cat M_0)$. The \memph{evaluation} map $\eval : W(\cat M_0) \fun \cat M_0$ is given by induction on the length of the word:
\[
\eval(\0) = \moi, \quad \eval(a) = a \text{ for } a \in \cat M_0, \quad \eval(a_1 \ldots a_{n+1}) = \eval(a_1 \ldots a_{n}) \otimes a_{n+1}.
\]
The arrows $w \fun w'$ in $\str(\cat M)$ are the arrows $\eval(w) \fun \eval(w')$ in $\cat M$. The monoidal product $\bar \otimes$ in $\str(\cat M)$ is given by concatenation ${v}  \botimes {w} = vw$ and
\begin{equation*}
  \bfig
      \square(0,0)/->`->`<-`->/<1000, 400>[\eval(vw)`\eval(v'w')`\eval(v) \otimes \eval(w)`\eval(v') \otimes \eval(w'); f \botimes g`\cong`\cong`f \otimes g]
  \efig
\end{equation*}
where the vertical arrows are the unique coherence isomorphisms. There are obvious candidates for symmetries,  duplicates, and discards in $\str(\cat M)$ and  it follows from the MacLane coherence theorem that $\eval : \str(\cat M) \fun \cat M$ is indeed a Markov functor. Moreover,  $\eval$ is full and faithful and surjective on isomorphism classes, and hence is a comonoid equivalence (this is rather clear if we apply \cite[Proposition~10.16]{fritz2020synthetic}); its quasi-inverse is the obvious full embedding $\alp : \cat M \fun \str(\cat M)$, indeed $\alp$  is the right inverse of $\eval$ since $\eval \circ \alp$ is the identity.

Now we may simply define an \memph{effect} in $\cat M$ over a sequence of objects $(a_1,
\ldots, a_n)$ to be  a morphism of the form $\eval([T \| S]_{(W, K)}) : \eval(w_S) \fun \eval(w_T)$, where $\eval(w_i) = a_i$ for all $i$.

\section{Functoriality over ordered directed acyclic graphs}

In any directed acyclic graph $G$, the directed paths induce a partial ordering on $V(G)$. We say that $G$ is \memph{ordered} if $V(G)$ comes equipped with a total ordering that extends this partial ordering. The category \cat{FinODAG} has ordered directed acyclic graphs with identity loops as objects and order-preserving graph homomorphisms between them as morphisms. Let $G = (V, A, s, t, \sigma)$ be an object in \cat{FinODAG}, where the total ordering is expressed by the bijection $\sigma: V \fun \{1, \ldots, n\}$; we shall also write $V = (v_1, \ldots, v_n)$.

%A \memph{clique} in \cat{FinODAG} is a nonempty finite set $\set{G_i  \given i \in I}$ of objects together with a finite set $\set{u_{ij} : G_i \fun G_j \given (i, j) \in I \times I}$ of morphisms such that $u_{ii} = 1_{G_i}$ and $u_{ik} = u_{jk} \circ u_{ij}$, in particular, every $u_{ij}$ is an isomorphism, since $1_{G_i} = u_{ji} \circ u_{ij}$.

\begin{ter}\label{graph:bas}
Elements of the free monoid $W(V)$ are also referred to as \memph{variables}  and those of length $1$, that is, the vertices in $V$,  \memph{atomic variables}. If no atomic variable occurs more than once in a variable $v$ then $v$ is \memph{singular}; in particular, $\0$ is singular.

%A singular variable is \memph{maximal} if  each atomic variable occurs exactly once in it.
%An atomic variable $v$ is a \memph{path ancestor} of another atomic variable $w$ if there is a directed path in $G$ from $v$ to $w$, and is an \memph{ancestor} of $w$ if it is a path ancestor of $w$ or is equal to $w$. Note that $v$ is not a path ancestor of itself since a directed path is not allowed to contain identity loops. Let $v = \bigotimes_{1 \leq i \leq n} v_i$, where each $v_i$ is atomic. We say that $v_S$, $v_{S'}$ are \memph{disjoint} if no atomic variable occurs in both of them, that is,   Note that being disjoint is not the same as $w \cap w' = \0$,  unless $v$ is singular.If $v$, $w$ are singular variables then  $v \cup w$ is abbreviated as $vw$ or $wv$, which denotes the unique sub-variable of $\dot V$ that contains exactly the atomic variables in $v$, $w$.

Concatenation of two variables $v$, $w$ is also written as $v \otimes w$. As before, for each  $S \sub \{1, \ldots, n\}$ let $v_S = \bigotimes_{i \in S} v_i$, where $v_\0 = \0$. Write $v_{S'} \sub v_S$, or $v_{S'} \in v_S$ if  $S'$ is a singleton, and say that $v_{S'}$ is a \memph{sub-variable} of $v_S$ if  $S' \sub S$.  Write $v_{S} \cap v_{S'} = v_{S \cap S'}$, $v_S \mi v_{S'} = v_{S \mi S'}$, and so on.
\end{ter}

%More generally, given a permutation $\lambda$ of $\{1, \ldots, n\}$, these notational schemes may be applied to the sub-variables of $\dot V_{\lambda} = \bigotimes_{1 \leq i \leq n} v_{\lambda(i)}$.

\begin{defn}\label{cau:gen}
The free $\DD$-graph monoid \memph{$W(G)$} has $W(V)$ as its set of vertices and, for each atomic variable $v \in V$, the following  arrows added:
\[
 \0 \toleft^{\epsilon_v} v \to^{\delta_v} vv, \quad \pa(v) \to^{\kappa_v} v,
\]
where $\pa(v)$ is the singular variable  that contains exactly the parents of $v$, and is more accurately denoted by $\pa_G(v)$ if necessary. We refer to $\kappa_v$ as a \memph{causal mechanism}. If $\pa(v) = \0$ then this adds an arrow $\0 \fun v$, which is called an \memph{exogenous} causal mechanism.
\end{defn}

Denote by $\cau(G)$ the free  Markov category over $W(G)$. For convenience, set $\kappa_\0 = 1_\0$ in $\cau(G)$.

We single out a special case of Definition~\ref{gen:condi}, which is the main object of our study below.

%\begin{nota}
%First we choose a total ordering on $V$ and denote the corresponding maximal singular variable by $\dot V$. All singular variables we shall speak of are sub-variables of $\dot V$.
%
%This maneuver would be redundant if $\cau(G)$ were commutative, but that is impossible for a free Markov category. The results below depend on the chosen ordering only because taking monoidal products of atomic variables does.
%\end{nota}

\begin{defn}\label{cau:condi}
Let $v$, $w$ be  singular variables in $\cau(G)$  and $K$ the set of causal mechanisms. The $(V, K)$-effect of $v$ on $w$ in $\cau(G)$ is referred to as the \memph{causal effect} of $v$ on $w$ and is denoted simply by $[w \| v]$,  or  by $[w]$ when $v = \0$, which is  called the \memph{prior} on $w$.
\end{defn}

This notion was first introduced in \cite[\S~4]{Fong:thesis}, where it is called causal conditional.  We call it causal effect because it is closely related to the eponymous notion in the literature on causality.

Let $\phi : H \fun G$ be a morphism in $\cat{FinODAG}$. We construct a graph monoid homomorphism
\[
f_\phi : W(G) \fun \cau(H)
\]
as follows. For any variable $v \in W(G)$, denote by $\phi^{-1}(v)$  the variable $\bigotimes_{w \in v} \bigotimes_{u \in \phi^{-1}(w)} u$ in $\cau(H)$, where ``$w \in v$'' enumerates the occurrences of atomic variables in $v$, following their order in $v$,  and  ``$u \in \phi^{-1}(w)$'' enumerates the  elements of the set $\phi^{-1}(w)$, following their order in $V_H$. Clearly $\phi^{-1}(v)$ is singular if and only if $v$ is; in that case, if $v = v_S$ for some $S$ then the order in which the monoidal product $\phi^{-1}(v)$ is taken is the one induced by $V_H$, because morphisms in $\cat{FinODAG}$ preserve order. Set $f_\phi(v) = \phi^{-1}(v)$. If $v$ is atomic  then set
\begin{equation}\label{sub:fun:gen}
\begin{gathered}
f_\phi(1_v) = 1_{\phi^{-1}(v)}, \quad f_\phi(\epsilon_v ) = \epsilon_{\phi^{-1}(v)}, \quad f_\phi(\delta_v ) = \delta_{\phi^{-1}(v)}, \\
 f_\phi(\kappa_v) = [\phi^{-1}(v) \| \phi^{-1}(\pa(v))]_{(V_H, K_H)},
\end{gathered}
\end{equation}
where $1_v$ stands for the identity loop on $v$ and $K_H = \set{\kappa_{x} : x \in V_H}$.

Note that $H_{\phi^{-1}(\pa(v)) \rightarrow \phi^{-1}(v)}$ does not contain any vertex that is not already in $\phi^{-1}(\pa(v))$ or $\phi^{-1}(v)$, but this does not mean that it is of height at most $1$.

\begin{defn}[Refinement]\label{phi:inter}
Since $\cau(G)$ is free, the graph monoid homomorphism $f_\phi$ induces a  Markov functor,  called the \memph{$\phi$-refinement}:
\[
\phi^* : \cau(G) \fun \cau(H).
\]

\end{defn}

In principle, $\phi^*$ is unique up to a unique  monoidal natural isomorphism. Here it is simply unique because it must be strict (see the remark after Theorem~\ref{free:markov:quot}).

Heuristically, we think of $\phi$ as a substitution operation because the vertices of the graphs are thought of as the atomic variables in the corresponding ``syntactic categories'' for causal inference.

\begin{exam}\label{trea:two}
Let $v \in \cau(G)$ be a singular variable and $G_{\bar v}$ be the subgraph of $G$ with all the incoming arrows at $v$ removed. Let $i_v : G_{\bar v} \fun G$ be the obvious graph embedding. We refer to the $i_v$-refinement
\[
i_{v}^* :  \cau(G) \fun \cau(G_{\bar v})
\]
as the \memph{$i_v$-intervention}. So, the only nontrivial assignment in (\ref{sub:fun:gen}) for the construction of $i_{v}^*$ is, for each $u \in v$,
\begin{equation}\label{inter:exam}
\begin{tikzpicture}[xscale = .5, yscale = .55, baseline=(current  bounding  box.center)]
\begin{pgfonlayer}{nodelayer}
		\node [style=none] (210) at (1.9, 4.3) {};
		\node [style=none] (211) at (1.9, 3.55) {};
		\node [style=wide small box] (225) at (1.9, 3.025) {$\kappa_u$};
		\node [style=none] (226) at (1.175, 2.575) {};
		\node [style=none] (227) at (1.175, 1.775) {};
		\node [style=none] (228) at (2.625, 2.575) {};
		\node [style=none] (229) at (2.625, 1.775) {};
		\node [style=none] (286) at (4.625, 2.925) {$\efun$};
		\node [style=none] (295) at (1.97, 2) {$\cdots$};
		\node [style=none] (296) at (7.2, 4.3) {};
		\node [style=none] (297) at (7.2, 3.55) {$\bullet$};
		\node [style=none] (299) at (6.475, 2.575) {$\bullet$};
		\node [style=none] (300) at (6.475, 1.775) {};
		\node [style=none] (301) at (7.925, 2.575) {$\bullet$};
		\node [style=none] (302) at (7.925, 1.775) {};
		\node [style=none] (304) at (7.27, 2) {$\cdots$};
		\node [style=none] (305) at (4.625, 0.425) {$i_{v}^*(\kappa_u) = \bar \kappa_u \otimes \epsilon_{\pa(u)}$};
	\end{pgfonlayer}
	\begin{pgfonlayer}{edgelayer}
		\draw [style=wire] (210.center) to (211.center);
		\draw [style=wire] (226.center) to (227.center);
		\draw [style=wire] (228.center) to (229.center);
		\draw [style=wire] (210.center) to (211.center);
		\draw [style=wire] (226.center) to (227.center);
		\draw [style=wire] (228.center) to (229.center);
		\draw [style=wire] (296.center) to (297.center);
		\draw [style=wire] (299.center) to (300.center);
		\draw [style=wire] (301.center) to (302.center);
		\draw [style=wire] (296.center) to (297.center);
		\draw [style=wire] (299.center) to (300.center);
		\draw [style=wire] (301.center) to (302.center);
	\end{pgfonlayer}
\end{tikzpicture}
\end{equation}
where $\kappa_u$, $\bar \kappa_u$ are the causal mechanisms on $u$ in $\cau(G)$, $\cau(G_{\bar v})$, respectively.

In the definition of $\cau(G)$, we do not have an exogenous causal mechanism $\0 \fun w$ for every $w \in V_G$ unless $w$ has no parents in $G$. Of course, such generators, indeed, arbitrary generators, can be introduced to suit the task at hand. In that case,  there is no need to go through this subgraph $G_{\bar v}$ anymore, since (\ref{inter:exam}) can already be defined within $\cau(G)$ itself so that $i_{v}^*$ becomes an endofunctor $\cau(G) \fun \cau(G)$. This is the approach adopted  in \cite{jacobs2019causal}, where a specific interpretation is intended for these exogenous causal mechanisms, namely uniform probability distribution. Casting $i_{v}^*$ as an endofunctor is merely for convenience and elegance, though, not a technical necessity.
\end{exam}

\begin{prop}\label{cond:preserve}
Let $v$, $w$ be singular variables in $\cau(G)$. Then $\phi^*([w \| v]) = [\phi^*(w) \| \phi^*(v)]$.
\end{prop}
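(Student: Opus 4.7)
The plan is to combine Proposition~\ref{mar:fun:eff} with an inductive unfolding inside $\cau(H)$. Since both $\cau(G)$ and $\cau(H)$ are straight (their object monoids are free, hence have no nontrivial idempotents or elements of finite order), Proposition~\ref{mar:fun:eff} applied to the strict Markov functor $\phi^*\colon\cau(G)\to\cau(H)$ gives
\[
\phi^*\bigl([w\|v]_{(V_G,K_G)}\bigr)\;=\;[w\|v]_{(\phi^*(V_G),\phi^*(K_G))},
\]
where on the right the ``coarse atomic'' objects are the compound variables $\phi^*(x)=\phi^{-1}(x)$ and the ``coarse mechanisms'' are the already nested effects $\phi^*(\kappa_x)=[\phi^{-1}(x)\|\phi^{-1}(\pa_G(x))]_{(V_H,K_H)}$. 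Thus it suffices to identify
\[
[w\|v]_{(\phi^*(V_G),\phi^*(K_G))}\;=\;[\phi^*(w)\|\phi^*(v)]_{(V_H,K_H)}
\]
inside $\cau(H)$, i.e.\ to show that the ``coarse'' effect and the ``fine'' one coincide.

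I would prove this identification by induction on the height $h$ of $G_{v\to w}$, paralleling the scheme of the proof of Proposition~\ref{mar:fun:eff}. The low-height cases are handled by direct inspection of the underlying diagrams: substituting, for each $x\in V(G_{v\to w})$, the fine diagram $\stri_{[\phi^{-1}(x)\|\phi^{-1}(\pa_G(x))]}$ into the slot occupied by $\phi^*(\kappa_x)$ in the coarse diagram $\stri_{[w\|v]_{(\phi^*(V_G),\phi^*(K_G))}}$, the surrounding multipliers (duplicates, discards, symmetries, identities) recombine via the Markov axioms and the disjointness of the fibres $\phi^{-1}(x)$ into exactly the multipliers that appear in $\stri_{[\phi^*(w)\|\phi^*(v)]_{(V_H,K_H)}}$.

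For the inductive step $h>1$, I would use Lemma~\ref{graph:split} inside $\cau(G)$ to factor $[w\|v]=[w\|u]\circ[u\|v]$ for a singular variable $u$ chosen so that both $G_{v\to u}$ and $G_{u\to w}$ have height strictly less than $h$, iterating as in the proof of Proposition~\ref{mar:fun:eff} should only one side's height drop. Functoriality of $\phi^*$ together with the inductive hypothesis then gives
\[
\phi^*([w\|v])\;=\;[\phi^*(w)\|\phi^*(u)]_{(V_H,K_H)}\circ[\phi^*(u)\|\phi^*(v)]_{(V_H,K_H)},
\]
and a second appeal to Lemma~\ref{graph:split}, this time in $\cau(H)$, recomposes the right-hand side into $[\phi^*(w)\|\phi^*(v)]_{(V_H,K_H)}$.

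The main technical obstacle is this second appeal: one must verify that the sub-variable $\phi^*(u)$ cuts out a separating level of $H_{\phi^*(v)\to\phi^*(w)}$ in the precise sense required by Lemma~\ref{graph:split}. This rests on the structural observation that $V(H_{\phi^*(v)\to\phi^*(w)})=\bigsqcup_{x\in V(G_{v\to w})}\phi^{-1}(x)$ and that, because $\phi$ is an order-preserving graph homomorphism, every directed path in $H$ from $\phi^{-1}(v)$ to $\phi^{-1}(w)$ projects under $\phi$ to a directed path in $G_{v\to w}$; consequently a separating antichain in $G_{v\to w}$ lifts via $\phi^{-1}$ to a separating set in $H_{\phi^*(v)\to\phi^*(w)}$. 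Once this lifting is established, the proposition follows from the inductive scheme above.
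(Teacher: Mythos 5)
Your opening move coincides with the paper's: both of you invoke Proposition~\ref{mar:fun:eff} for the strict Markov functor $\phi^*$ to reduce the claim to the identification $[w\|v]_{(\phi^*(V_G),\phi^*(K_G))}=[\phi^*(w)\|\phi^*(v)]_{(V_H,K_H)}$ inside $\cau(H)$. From that point the paper does not induct on height: it expands each $\phi^*(\kappa_x)$ into the diagram $\stri_{[\phi^{-1}(x)\|\phi^{-1}(\pa(x))]_{(V_H,K_H)}}$, Markov-minimizes the resulting diagram, and then exhibits a valuation-preserving bijection between its maximal directed paths and those of the (already minimal) fine diagram, concluding by Markov congruence. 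Your replacement of that step by an induction on the height of $G_{v\to w}$ has two genuine gaps.

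First, the base case is not a ``direct inspection.'' Even when $G_{v\to w}$ has height $0$ or $1$, each inserted block $[\phi^{-1}(x)\|\phi^{-1}(\pa_G(x))]_{(V_H,K_H)}$ is an effect diagram of arbitrary height in $H$ (the paper explicitly warns that $H_{\phi^{-1}(\pa(x))\to\phi^{-1}(x)}$ need not have height at most $1$). The assertion that the surrounding multipliers ``recombine into exactly'' those of $\stri_{[\phi^*(w)\|\phi^*(v)]_{(V_H,K_H)}}$ is therefore the whole content of the proposition in miniature; it needs the path-matching (or some equivalent) argument, not an inspection. Second, your inductive step requires the composition identity $[\phi^*(w)\|\phi^*(u)]\circ[\phi^*(u)\|\phi^*(v)]=[\phi^*(w)\|\phi^*(v)]$ in $\cau(H)$ for the lifted cut $\phi^{-1}(u)$, but Lemma~\ref{graph:split} does not deliver this for an arbitrary separating set: it only produces the specific sets $T_i$ cut out by a level built from a single component $\Gamma_i$. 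Your proposed repair rests on the equality $V(H_{\phi^*(v)\to\phi^*(w)})=\bigsqcup_{x\in V(G_{v\to w})}\phi^{-1}(x)$, which is false in general: a vertex of a fibre $\phi^{-1}(x)$ need not lie on any directed path of $H$ ending in $\phi^{-1}(w)$, so only the inclusion ``$\subseteq$'' holds (this is exactly why the paper claims only that $\phi(H_{\phi^*(v)\to\phi^*(w)})$ is a subgraph of $G_{v\to w}$ and relies on Markov minimization to discard the surplus components). As written, the proposal does not close either gap.
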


This assertion is a bit terse  and may seem to be just a consequence of Proposition~\ref{mar:fun:eff}. But the situation   becomes clearer if we do not abbreviate the notation:
\begin{equation}\label{comp:two}
\bfig \morphism(0,0)/=/<1450,0>[[\phi^*(w) \| \phi^*(v)]_{( \phi^*(V_G), \phi^*(K_G))}`\phi^*([w \| v]_{(V_G, K_G)}); \textup{Proposition~\ref{mar:fun:eff}}]
\morphism(1450,0)/=/<1330,0>[\phi^*([w \| v]_{(V_G, K_G)})`[\phi^*(w) \| \phi^*(v)]_{(V_H, K_H)}; \textup{Proposition~\ref{cond:preserve}}]
\efig
\end{equation}
where $K_G = \set{\kappa_{u} : u \in V_G}$.

\begin{proof}
We shall indeed show that the first and last effects in (\ref{comp:two}) are equal. To that end, let $\Gamma_{\phi^*G}$ be the diagram constructed from the diagram $\Gamma_{[\phi^*(w) \| \phi^*(v)]_{( \phi^*(V_G), \phi^*(K_G))}}$ by depicting each relevant $\phi_*(\kappa_y)$ as $\Gamma_{[\phi^{-1}(y) \| \phi^{-1}(\pa(y))]_{(V_H, K_H)}}$ and then rewriting the multipliers accordingly. After surgeries, we assume that $\Gamma_{\phi^*G}$ is Markov minimal. Abbreviate $\Gamma_{[\phi^*(w) \| \phi^*(v)]_{(V_H, K_H)}}$ as $\Gamma_{H}$; note that $\Gamma_{H}$ is already Markov minimal. Therefore, it is enough to show that $\Gamma_{\phi^*G}$ is Markov congruent to $\Gamma_{H}$.

If $p$ is a directed path in $H_{\phi^*(v) \rightarrow \phi^*(w)}$ that ends at some vertex $x \in V_H$ then  $\phi(p)$ is a directed path in $G_{v \rightarrow w}$ that ends at  $\phi(x) \in V_G$ and does not travel toward $v$, unless $p$ is completely collapsed by $\phi$, in which case $\phi(p)$ is the identity loop on $\phi(x)$. So $\phi(H_{\phi^*(v) \rightarrow \phi^*(w)})$ is a subgraph of $G_{v \rightarrow w}$. This implies that $\Gamma_{\phi^*G}$ has a unique node with the value $\kappa_x$ for every vertex $x \in V(H_{\phi^*(v) \rightarrow \phi^*(w)}) \mi \phi^*(v)$. Since  $\Gamma_{\phi^*G}$ is Markov minimal, we see that if every multiplier involved is depicted as a diagram with at most one node (recall the last sentence of Remark~\ref{mul:pow}) then there is a bijection between the maximal directed paths  in $\Gamma_{\phi^*G}$ and those in $\Gamma_{H}$ such that each matching pair thread through edges and nodes with the same values. It follows that $\Gamma_{\phi^*G}$ is Markov congruent to $\Gamma_{H}$.
\end{proof}

%and the multiplier in the component $\Gamma_u$ of $\Gamma_{H}$ is Markov congruent to a subdiagram of the multiplier in the component $\Gamma_u$ of $\Gamma_{\phi^*G}$. In fact, if the latter multiplier had an extra node then every maximal directed path passing through it in $\Gamma_{\phi^*G}$ must end at a terminal node, for otherwise, it would end in $\phi^*(w)$ and hence must be in the subdiagram in question.
%
%
%It is not hard to see that the is  used in are  there is an embedding $\phi: \stri'_H \fun \stri_{\phi^*G}$, where $\stri'_H$ is Markov congruent to $\Gamma_{H}$. On the other hand, since we may assume that, in its construction as a diagram of an effect, each  component $\stri_{\phi^*(u)}$ is a composition of a multiplier and a diagram of the form (\ref{height:1}), but without terminal nodes, unless $u \in v$. Therefore,

\begin{thm}[Functoriality of refinement]\label{int:func}
The construction of $\phi^*$ is functorial, that is, there is a contravariant  functor
\[
\intv : \cat{FinODAG} \fun \cat{MarCat}
\]
sending each object  $G \in  \cat{FinODAG}$ to $\cau(G)$ and each morphism $\phi \in  \cat{FinODAG}$ to $\phi^*$.
\end{thm}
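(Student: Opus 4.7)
The plan is to invoke the universal property of $\cau(G)$ as a free Markov category (Theorem~\ref{free:markov:quot}) to reduce functoriality to a check on generators, and then to let Proposition~\ref{cond:preserve} do the heavy lifting. Since the Markov functors in question are strict, two strict Markov functors out of $\cau(G)$ that agree on the image of $W(G) \fun \cau(G)$ are literally equal, not merely isomorphic; so it suffices to compare the induced graph monoid homomorphisms $W(G) \fun \cau(\bullet)$.

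For the identity morphism $\id_G$ in $\cat{FinODAG}$, the homomorphism $f_{\id_G}$ sends each variable and each generator $1_v, \epsilon_v, \delta_v$ to itself. The only nontrivial point is that $f_{\id_G}(\kappa_v) = \kappa_v$, i.e., $[v \| \pa(v)]_{(V_G, K_G)} = \kappa_v$. But in Definition~\ref{gen:condi} applied with $S = \pa(v)$ and $T = \{v\}$, the subgraph $G_{S \rightarrow T}$ consists precisely of the parents of $v$ (each in $S \mi T$, contributing identity multipliers) together with $v$ itself (in $T \mi S$, contributing $\kappa_v$), and the fusion of these components reduces to $\kappa_v$. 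Hence $f_{\id_G}$ coincides with the graph monoid embedding defining $\cau(G)$, and by uniqueness its extension $\id_G^*$ is $\id_{\cau(G)}$.

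For composition, given $\psi : K \fun H$ and $\phi : H \fun G$ in $\cat{FinODAG}$, both $(\phi \circ \psi)^*$ and $\psi^* \circ \phi^*$ are strict Markov functors $\cau(G) \fun \cau(K)$. I would check that $\psi^* \circ f_\phi = f_{\phi \circ \psi}$ as graph monoid homomorphisms $W(G) \fun \cau(K)$. For an atomic variable $v \in V_G$, the identity $\psi^{-1}(\phi^{-1}(v)) = (\phi \circ \psi)^{-1}(v)$, read as ordered concatenations, follows from order-preservation of $\psi$, and this handles objects as well as the generators $1_v, \epsilon_v, \delta_v$. For the causal mechanism $\kappa_v$, Proposition~\ref{cond:preserve} applied to $\psi$ gives
\[
\psi^{*}\bigl([\phi^{-1}(v) \| \phi^{-1}(\pa(v))]_{(V_H, K_H)}\bigr) = [\psi^{-1}(\phi^{-1}(v)) \| \psi^{-1}(\phi^{-1}(\pa(v)))]_{(V_K, K_K)},
\]
which by the preimage identity above equals $f_{\phi \circ \psi}(\kappa_v)$.

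The main obstacle is essentially bookkeeping rather than substance: one must keep careful track of the ordered-concatenation conventions so that the set-theoretic equality $\psi^{-1} \circ \phi^{-1} = (\phi \circ \psi)^{-1}$ lifts to an equality of singular variables in $\cau(K)$. Once that is in place, all of the real mathematical content is packaged inside Proposition~\ref{cond:preserve}, and the universal property of $\cau(G)$ delivers the conclusion.
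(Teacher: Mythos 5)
Your proposal is correct and follows essentially the same route as the paper: reduce to generators via freeness of $\cau(G)$, handle objects and $1_v,\epsilon_v,\delta_v$ by order-preservation, and dispatch $\kappa_v$ by Proposition~\ref{cond:preserve} together with the identity $\psi^{-1}\circ\phi^{-1}=(\phi\circ\psi)^{-1}$ on ordered concatenations. The only difference is that you also verify explicitly that $\id_G^*=\id_{\cau(G)}$ (via $[v\|\pa(v)]_{(V_G,K_G)}=\kappa_v$), a point the paper leaves implicit; that check is correct and slightly more complete.
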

\begin{proof}
We need to check that, for all morphisms $G'' \to^\psi G' \to^\phi G$ in $\cat{FinODAG}$,  $\psi^* \circ \phi^* = (\phi \circ \psi)^*$ as symmetric strict monoidal functors. For objects, this follows from the fact that  morphisms in $\cat{FinODAG}$ preserve order. For morphisms, since the categories are free, it is enough to check that, for all generators $g \in \cau(G)$, $\psi^*(\phi^*(g)) = (\psi \circ \phi)^*(g)$. This is clear if $g$ is  $1_v$, $\epsilon_v$, or $\delta_v$. The case $g = \kappa_v$ follows from Proposition~\ref{cond:preserve}.
\end{proof}

%---------------------------------------------------------------------
%Included for Gather Purpose only:
%input "C:\localtexmf\bibtex\bib\mybib\MYbib.bib"
%\bibliographystyle{amsplain}
%\bibliography{C:/Users/yimuy/Dropbox/causalityandcounterfactual/cau_add_ref}
%---------------------------------------------------------------------

\providecommand{\bysame}{\leavevmode\hbox to3em{\hrulefill}\thinspace}
\providecommand{\MR}{\relax\ifhmode\unskip\space\fi MR }
% \MRhref is called by the amsart/book/proc definition of \MR.
\providecommand{\MRhref}[2]{%
  \href{http://www.ams.org/mathscinet-getitem?mr=#1}{#2}
}
\providecommand{\href}[2]{#2}

\end{document}